\documentclass[12pt]{amsart}
\usepackage[T1]{fontenc}
\usepackage{tgtermes}
\usepackage[linkcolor=blue, citecolor = blue, linktocpage = true]{hyperref}
\hypersetup{colorlinks=true}

\usepackage{amsthm,amsfonts,amsmath,amssymb,amscd} 

\usepackage{verbatim}
\usepackage{float}
\usepackage{wrapfig}
\usepackage{mathtools}
\usepackage[color=orange!20, linecolor=orange]{todonotes}

\usepackage{enumitem}

\usepackage{ytableau}

\usepackage{tikz}
\usepackage{tikz-cd} 
\usetikzlibrary{decorations.markings}
\tikzstyle{vertex}=[circle, draw, inner sep=0pt, minimum size=4pt]

\usetikzlibrary{arrows,automata}
\usepackage{tikz, tikz-3dplot}
\usetikzlibrary[positioning,patterns]
\usepackage{caption}
\usepackage{subcaption}
\usepackage{lscape}
\usepackage{rotating}

\newtheorem{theorem}{Theorem}[section]
\newtheorem{proposition}[theorem]{Proposition}
\newtheorem{lemma}[theorem]{Lemma}

\theoremstyle{definition}

\newtheorem{example}[theorem]{Example}

\newtheorem{remark}[theorem]{Remark}

\newcommand{\sgn}{\mathrm{sgn}}
\newcommand{\dett}{\mathrm{det}}

\usepackage{mathabx,epsfig}

\title[Group and algebra hyperdeterminant]
{Group and algebra hyperdeterminant}

\author[Alimzhan Amanov]{Alimzhan Amanov}

\address{
  Kazakh-British Technical University, Kazakhstan
  }
\email{\href{mailto:alimzhan.amanov@gmail.com}{alimzhan.amanov@gmail.com}}

\date{\today}

\begin{document}

\begin{abstract}
    In 1896, Dedekind posed the problem of factoring the group determinant in the non-abelian case to Frobenius, whose solution sparked the birth of finite-group representation theory. Several decades earlier, Cayley introduced the notion of the combinatorial hyperdeterminant of a $d$-way tensor, which is the most natural generalization of an ordinary determinant. In this note, we solve the problem of factoring the group hyperdeterminant. We reduce the computation of the group hyperdeterminant to the computation of the hyperdeterminant at the matrix multiplication tensor and derive a nice closed formula. Further, we extend this notion to associative algebra tensors and show that this polynomial is nonzero if and only if the algebra is semisimple.
\end{abstract}
\maketitle

\section{Introduction}

Let $G = \{g_1,\ldots, g_n\}$ be a finite group of order $|G| = n$. The group determinant is a polynomial in commuting variables $\{x_g \mid g\ \in G\}$ of degree $n$, defined as the determinant of the $n \times n$ matrix $C^{G}_{ij}(x) = x_{g_i g_j}$. This matrix represents the multiplication table of the group algebra of $G$, with its elements replaced by commuting variables.
The group determinant was introduced by Dedekind. He proved that it factorizes into linear forms in the abelian case, which is not true for a non-abelian group. He proposed this problem to Frobenius in 1896, who solved it and subsequently discovered character theory (see \cite{conrad} for a review).
\begin{theorem}[Frobenius \cite{frob}]\label{th:frob}
For any finite group $G$, the group determinant factors as follows:
$$
    \dett_2(C^{G}(x)) = \pm\prod_{\rho \in \widehat{G}} \dett_2\left( \sum_g \rho(g) x_g \right)^{n_\rho},
$$
where $\widehat{G}$ is a complete set of inequivalent irreducible unitary representations, and each polynomial factor is irreducible of total degree $n_{\rho}:=\dim \rho$ and occurs with multiplicity $n_{\rho}$.
\end{theorem}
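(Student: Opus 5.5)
We relate the matrix $C^G(x)$ to the left regular representation and then decompose that representation over $\mathbb{C}$.

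\textbf{Step 1: reduce to the regular representation.} Let $\lambda$ denote the left regular representation of $G$ on $\mathbb{C}[G]$, so that $\lambda(g)e_h = e_{gh}$. Set $L(x) = \sum_g x_g \lambda(g)$. Its $(i,j)$ entry is $x_{g_i g_j^{-1}}$. The matrix $C^G(x)$ has entries $x_{g_i g_j}$, so $C^G(x) = L(x)P$, where $P$ is the permutation matrix of the involution $g_j \mapsto g_j^{-1}$ applied to the columns. Hence
$$\dett_2(C^G(x)) = \pm \dett_2(L(x)),$$
which accounts for the sign in the statement.

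\textbf{Step 2: decompose the regular representation.} By Maschke's theorem and the standard decomposition of the regular representation, there is an invertible matrix $T$ with
$$T\lambda(g)T^{-1} = \bigoplus_{\rho\in\widehat G} \rho(g)^{\oplus n_\rho}.$$
Each irreducible $\rho$ occurs with multiplicity $n_\rho$, since $\langle \chi_\lambda, \chi_\rho\rangle = \chi_\rho(1)$. By linearity, $TL(x)T^{-1}$ is block diagonal with blocks $\sum_g x_g\rho(g)$, each repeated $n_\rho$ times. Taking determinants gives the product formula. Each block has size $n_\rho$, so each factor $\Phi_\rho(x) := \dett_2\left(\sum_g x_g\rho(g)\right)$ is homogeneous of degree $n_\rho$.

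\textbf{Step 3: irreducibility (the main obstacle).} It remains to show that each $\Phi_\rho$ is irreducible. The key input is Burnside's theorem: for irreducible $\rho$ over $\mathbb{C}$, the matrices $\rho(g)$ span all of $M_{n_\rho}(\mathbb{C})$. Therefore the linear map $x \mapsto \sum_g x_g \rho(g)$ from $\mathbb{C}^n$ to $M_{n_\rho}(\mathbb{C})$ is surjective. After a linear change of variables, $\Phi_\rho$ becomes the generic determinant $\det(y_{kl})$ in $n_\rho^2$ independent variables, with the remaining variables not appearing.

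The generic determinant is irreducible. One argument: it is linear in each variable $y_{kl}$. If it factored as $AB$, each $y_{kl}$ would occur in exactly one factor. If $y_{11}$ lies in $A$, then every $y_{1l}$ and every $y_{k1}$ must also lie in $A$, since otherwise the product would contain a monomial with $y_{11}y_{1l}$ or $y_{11}y_{k1}$, which the determinant does not have. Propagating this along rows and columns puts all variables in $A$, so $B$ is constant.

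Irreducibility is preserved under an invertible linear substitution. Adjoining variables that do not occur also preserves it, so $\Phi_\rho$ is irreducible in $\mathbb{C}[x_g]$.

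Finally, for the multiplicity statement to be meaningful, the factors for inequivalent $\rho$ should be non-associate. This is not needed for the factorization itself, but one can check it by recovering the character from $\Phi_\rho$: the coefficient of $x_1^{n_\rho-1}x_g$ in $\Phi_\rho$ is $\chi_\rho(g)$, and characters determine representations up to equivalence.
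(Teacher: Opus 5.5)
Your proof is correct, but it follows a different route from the paper. The paper does not prove this theorem. It cites Frobenius and recovers only the factorization, with sign $(-1)^{(|G|-t)/2}$ and without irreducibility or distinctness of the factors, as the case $d=2$ of Theorem~\ref{th:main}.

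In the paper, $C^G(x)$ is treated as a $2$-way tensor, and the unitary Fourier matrix $U_{(\rho,i,j),g}=(n_\rho/|G|)^{1/2}\rho(g)_{ij}$ is applied in both modes. That is, one forms the congruence $U\,C^G(x)\,U^{T}$ rather than a similarity. Schur orthogonality makes it block diagonal with blocks $\mathsf{M}^{(2)}_{\rho(x)}$. The exponent $n_\rho$ then comes from Lemma~\ref{lemma:trace}, i.e.\ from $\det(A\mapsto\rho(x)A)=\det(\rho(x))^{n_\rho}$ on $\mathrm{End}(\mathbb{C}^{n_\rho})$. This is the same fact as your multiplicity of $\rho$ in the regular representation, read off column by column.

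The paper's formulation is chosen because it carries over verbatim to $d$-way tensors, where there is no operator or similarity interpretation. The price is sign bookkeeping: $\dett_2(U)^2=(-1)^{(|G|-t)/2+\sum_\rho\binom{n_\rho}{2}}$ (Lemma~\ref{lemma:sign}) must be combined with $\dett_2(\mathsf{M}^{(2)}_{n_\rho})=(-1)^{\binom{n_\rho}{2}}$, the sign of the transpose permutation of $[n_\rho]^2$. What remains is exactly the sign of your column permutation $g\mapsto g^{-1}$.

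Your identity $C^G(x)=L(x)P$ works only for $d=2$, but it is shorter. Your Step~3 also supplies what the paper only quotes. Burnside's theorem makes the entries of $\rho(x)$ independent coordinates; up to scaling, these are the paper's Fourier coordinates $Ux$ (cf.\ Theorem~\ref{th:alg}). The row/column propagation argument then gives irreducibility.

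The invertibility of $U$ shows even more: the coordinates for all $\rho$ are jointly independent. So factors for distinct $\rho$ live in disjoint sets of variables and are non-associate without any appeal to characters. If you keep the character argument, fix one detail. The coefficient of $x_1^{n_\rho-1}x_g$ equals $\chi_\rho(g)$ only for $g\neq 1$, since the coefficient of $x_1^{n_\rho}$ is $1$. This does no harm, because $\chi_\rho(1)=n_\rho=\deg\Phi_\rho$ is recovered anyway.
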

Total degree of a polynomial is $n = 
\sum_{\rho} n_\rho^2$. 
Moreover, the coefficient of $(x_{\mathrm{id}})^{n_\rho - 1} x_g$ in this irreducible factor 
is equal to $\mathrm{tr}(\rho(g))$, the value of the irreducible character. Other coefficients are known as $k$-characters. Note that the group determinant determines the group \cite{formsibl}.

A tensor $X$ is an element of the vector space $(\mathbb{C}^n)^{\otimes d}$. With the fixed basis in each mode, $X$ can be identified with the hypermatrix $(X_{i_1,\ldots,i_d})_{i_k\in[n]}$, whose entries are indexed by $(i_1,\ldots,i_d)$. Throughout the paper $d$ is assumed to be even.

The \textit{combinatorial hyperdeterminant} was introduced by Cayley \cite{cay,cay2} in 1845 (also referred to as Cayley's first hyperdeterminant) as a natural generalization of the determinant of a matrix to $d$-dimensional hypermatrices. If $X \in (\mathbb{C}^n)^{\otimes d}$ is a $d$-way tensor (hypermatrix), then
\begin{align}\label{eq:hdet}
    \dett_d(X) := \frac{1}{n!}\sum_{\sigma_1,\ldots,\sigma_d \in S_n} \sgn(\sigma_1)\cdots\sgn(\sigma_d) \prod_{i=1}^n X_{\sigma_1(i),\ldots,\sigma_d(i)}.
\end{align}
For odd $d$, the above sum trivially vanishes, while for even $d$ it 
is the unique $\mathrm{SL}(n,\mathbb{C})^{\times d}$-invariant of smallest degree $n$ of the polynomial ring over tensors $(\mathbb{C}^n)^{\otimes d}$, see Section \ref{sec:pre}. 

We study the problem of computing this hyperdeterminant on a natural generalization of a group (and consequently an algebra) hypermatrix.
Define the \textit{group tensor or hypermatrix} $C^G(x)$ by
$$
    (C^G(x))_{g_1,\ldots,g_d} := x_{g_1 \cdot \ldots \cdot g_d},
$$
viewed as an element of $(\mathbb{C}G)^{\otimes d}$ with coefficients in formal variables $\{x_g \mid g \in G \}$.
Similar to the case $d = 2$, this tensor is constructed from the $d$-way multiplication table in the group algebra $\mathbb{C}G$ by replacing $h$ with $x_h$. This tensor carries full information about the group algebra: the slice $(C^G(1,0,\ldots))_{a,b,c,1,1\ldots}$ describes the multiplication tensor of the algebra (solutions of $abc=1$ imply solutions of $ab = c$). It turns out that it admits the following decomposition.
\begin{theorem}[Main theorem]\label{th:main}
    For an arbitrary finite group $G$ and even $d$, we have
    $$
        \dett_d(C^G(x)) =  H_G \cdot \prod_{\rho \in \widehat{G}} \dett_2(\rho(x))^{n_\rho}
    $$
    where $\rho(x) = \sum_g x_g \rho(g)$, $n_\rho = \dim \rho$ and
    $$
        H_G = (-1)^{\frac{d}{2}(|G| - t)/2}  \left(
        |G|^{|G|}\     
        \prod_{\rho \in \widehat{G}} \frac{h_{n_\rho \times n_\rho}}{n_\rho^{n_\rho}}
        \right)^{\frac{d}{2}-1},
    $$
    where $t = \#\{g \in G: g = g^{-1}\}$ and $h_{k \times k} = \prod_{i=0}^{n-1}\frac{(k + i)!}{i!}$. In particular, for $d = 2$, $H_G = \pm 1$.
\end{theorem}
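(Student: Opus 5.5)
The plan is to rewrite $C^G(x)$ in a Fourier basis and use the $\mathrm{SL}^{\times d}$-invariance of $\dett_d$. The group tensor is the composite of the $d$-fold multiplication map $\mathbb{C}G^{\otimes d}\to\mathbb{C}G$ with the linear functional $h\mapsto x_h$. By Wedderburn--Frobenius, the Fourier transform $\mathbb{C}G\cong\bigoplus_{\rho}\mathrm{End}(V_\rho)$ turns the multiplication into block-diagonal matrix multiplication. This change of basis is applied in all $d$ modes and has some determinant $\delta$, so $\dett_d$ changes by the factor $\delta^d$; this accounts for $|G|^{|G|}$ and $\prod n_\rho^{n_\rho}$ up to a sign. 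The functional $x$ becomes $\sum_\rho \mathrm{tr}(\rho(x)^{T}\,\cdot)$ up to normalizations $n_\rho/|G|$. The transformed tensor is then
$$T=\bigoplus_\rho c_\rho\,\mathrm{tr}\big(X_\rho A_1A_2\cdots A_d\big),\qquad X_\rho=\rho(x),$$
that is, a direct sum over $\rho$ of the cyclic matrix-multiplication tensor $\langle n_\rho\rangle_d$ (the $d$-cycle trace tensor on $(\mathbb{C}^{n_\rho^2})^{\otimes d}$), twisted by $X_\rho$ in one mode.

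The next step reduces to the untwisted tensor. When $X_\rho$ is invertible, the twist is the action of $\mathrm{id}\otimes X_\rho$ (left multiplication) on the first mode $\mathrm{End}(V_\rho)\cong\mathbb{C}^{n_\rho^2}$. This linear map has determinant $\dett_2(X_\rho)^{n_\rho}$. Since $\dett_d$ is multilinear-invariant with character $\det$ in each mode, the block contributes $\dett_2(\rho(x))^{n_\rho}\cdot\dett_d(\langle n_\rho\rangle_d)\cdot c_\rho^{n_\rho^2}$. Both sides are polynomials, so the identity extends from the dense invertible locus.

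We also need $\dett_d$ to be multiplicative on direct sums. We will check from \eqref{eq:hdet} that, for a block-diagonal tensor $X\oplus Y$, every nonzero term uses exactly $\dim$ indices from each block, all in the same positions. Consequently $\dett_d(X\oplus Y)=\pm\dett_d(X)\dett_d(Y)$, with the sign coming from interleaving, which is trivial for even $d$ after bookkeeping. This yields
$$\dett_d(C^G(x))=\pm\,\delta^{d}\prod_\rho c_\rho^{n_\rho^2}\,\dett_d(\langle n_\rho\rangle_d)\,\dett_2(\rho(x))^{n_\rho}.$$

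The main obstacle is the closed evaluation $\dett_d(\langle k\rangle_d)=\pm h_{k\times k}^{d/2-1}$ up to normalization. The plan is to contract the cyclic trace tensor pairwise. Writing $\mathrm{tr}(A_1\cdots A_d)$ in indices, each mode is $\mathbb{C}^k\otimes\mathbb{C}^k$ with pairings along a cycle, so $\langle k\rangle_d$ is a tensor network of $d$ identity matrices $\mathbb{C}^k$ arranged on a cycle. For $d=2$ it is just the identity, with $\dett_2=\pm1$. For larger $d$ we will expand \eqref{eq:hdet} and reduce the $d$-cycle to $d-2$ via a Cauchy--Binet type identity for hyperdeterminants of such networks. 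Each reduction step should contribute the factor $h_{k\times k}=\prod_i (k+i)!/i!$. This number counts $k\times k$ standard Young tableaux up to $(k^2)!$ and equals the Alon--Tarsi-type sum $\sum_{\sigma,\tau}\sgn\,\cdots$ for Latin-square-like configurations that arise here. We would first verify the case $d=4$ by identifying $\dett_4(\langle k\rangle_4)$ with a known evaluation via Schur--Weyl duality: the unique invariant pairs with the $\mathrm{SL}$-invariant in $\mathrm{Sym}$ of $\mathbb{C}^k\otimes\mathbb{C}^k$, and the rectangular-shape dimension formula yields $h_{k\times k}$. We would then induct on $d$ by gluing.

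Finally, we track the constants. The $|G|^{|G|}$ and $n_\rho$ powers appear with exponent $d/2-1$ because $\delta^d$ and the normalizations $c_\rho$ nearly cancel, leaving exponent $d/2-1$; this is consistent with $H_G=\pm1$ when $d=2$. The sign $(-1)^{\frac d2(|G|-t)/2}$ comes from the determinant of the transposition $g\mapsto g^{-1}$ used to symmetrize the Fourier basis. That map has $(|G|-t)/2$ transposed pairs, and its sign enters raised to the power $d/2$.
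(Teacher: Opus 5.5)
Your reduction steps are the same as the paper's:
\begin{itemize}
\item a Fourier transform in every mode turns $C^G(x)$ into $\bigoplus_\rho\gamma_\rho\mathsf{M}^{(d)}_{\rho(x)}$ (Theorem~\ref{th:block});
\item $\dett_d$ is multiplicative on direct sums, exactly and with no interleaving sign (Proposition~\ref{prop:direct});
\item the twist is removed by left multiplication on one mode, contributing $\dett_2(\rho(x))^{n_\rho}$ (Lemma~\ref{lemma:trace}).
\end{itemize}

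The genuine gap is the step you flag as the main obstacle, the evaluation $\dett_d(\mathsf{M}_k)=\pm h_{k\times k}^{d/2-1}$. It is the only source of the hook product and of $H_G\neq 0$.

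Your $d\to d-2$ gluing can in fact be justified. $\mathsf{M}^{(d)}_k$ is the contraction of $\mathsf{M}^{(d-2)}_k$ with $\mathsf{M}^{(4)}_k$ through the transpose $E_{ij}\mapsto E_{ji}$. For even $d,e$ one has $\dett_{d+e-2}(X\cdot Y)=\dett_d(X)\dett_e(Y)$, because for even $d$ the degree-$N$ polynomials on $(\mathbb{C}^N)^{\otimes d}$ invariant under $\mathrm{SL}_N$ on $d-1$ of the modes form a one-dimensional space spanned by $\dett_d$. But you do not prove this, and it only yields $\dett_d(\mathsf{M}_k)=\pm\dett_4(\mathsf{M}_k)^{d/2-1}$. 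All the content then sits in $\dett_4(\mathsf{M}_k)=\pm h_{k\times k}$. Your sentence about Schur--Weyl duality and a ``rectangular-shape dimension formula'' is not an argument for this: nothing in it explains why the signed count of nonvanishing terms is a product of hooks.

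The idea you are missing is the paper's. Expand $\dett_d(\mathsf{M}_k)$ with $\sigma_1=\mathrm{id}$. A term is nonzero iff $\sigma_i(p)=(a_i(p),a_{i+1}(p))$ cyclically, with $a_1=\mathrm{row}$ and $a_2=\mathrm{col}$. Write $\sigma_{i-1}=\pi_{i-1}\tau\sigma_i$, where $\tau$ is the transpose of $[k]^2$ and each $\pi_i$ lies in the stabilizer $C\subset S_{k^2}$ of the second coordinate. Then the nonzero terms are in bijection with tuples satisfying $\pi_1(\tau\pi_2\tau)\cdots\pi_{d-1}(\tau\pi_d\tau)=\mathrm{id}$, each with sign $\sgn(\tau)^{d/2}\prod_i\sgn(\pi_{2i})$. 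Since $\tau C\tau=R$ is the row stabilizer, the signed count is the coefficient of the identity in $c_\lambda^{d/2}$, where $c_\lambda=a_\lambda b_\lambda$ is the Young symmetrizer of $\lambda=k\times k$. The relations $c_\lambda^2=h_\lambda c_\lambda$ and $c_\lambda(\mathrm{id})=1$ then give $\dett_d(\mathsf{M}_k)=(-1)^{\frac d2\binom k2}h_{k\times k}^{d/2-1}$ for all even $d$ at once. The quasi-idempotence is exactly the per-step factor you anticipated.

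The exact sign matters for the statement. By Lemma~\ref{lemma:sign}, $\dett_2(U)^2=(-1)^{(|G|-t)/2+\sum_\rho\binom{n_\rho}{2}}$, which includes the index swap $(\rho,i,j)\mapsto(\rho,j,i)$ that you omit. Its contribution is cancelled only by the precise sign of $\dett_d(\mathsf{M}_{n_\rho})$, so knowing the latter only up to $\pm$ cannot produce the sign of $H_G$.

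Finally, your magnitude bookkeeping is asserted rather than carried out. If you do it, each block has side $n_\rho^2$, so its scalar enters to the power $n_\rho^2$, as in your own block formula and in the paper's penultimate display. The net constant is $\prod_\rho(|G|/n_\rho)^{(d/2-1)n_\rho^2}$, which gives $n_\rho^{n_\rho^2}$ rather than the printed $n_\rho^{n_\rho}$. The printed exponent appears to be a misprint, so claiming your computation ``accounts for'' it is not correct.
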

The result is obtained using standard representation theory, properties of the hyperdeterminant, and methods from algebraic combinatorics.
The combinatorial hyperdeterminant has a number of nice properties: formulas, relation to different notions of tensor ranks \cite{ay}, enumeration results related to Selberg integrals \cite{luq-thi}, symmetric functions \cite{matsumoto}, Hamiltonian cycles and intersection of matroids \cite{barvinok}. On the other hand the decision problem of hyperdeterminant vanishing is NP-hard \cite{barvinok} and in general not much is known about its vanishing set. 
Thus, the result above is a numerical representation-theoretic miracle. We illustrate it with a small example.

\begin{example}[Circulant hyperdeterminant]
    Let $G = \mathbb{Z}/n\mathbb{Z} = \mathbb{Z}_n$. Then for $d = 2$ the group matrix $(x_{(i+j)\mathrm{mod}\ n})$ is known as a \textit{circulant matrix}. Its determinant factors into $n$ linear forms over $\mathbb{C}$, each corresponding to one of the $n$ roots of unity $\omega^i=e^{2\pi i/n}$. The following formula is a beautiful generalization of this phenomenon for arbitrary even $d$:
    $$
        \dett_d(x_{(i_1 + \ldots + i_d)\mathrm{mod}\ n}) = \pm\ n^{n(\frac{d}{2} - 1)} \prod_{i=0}^{n-1} \left(\sum_{j=0}^{n-1} x_j\omega^{ij}\right).
    $$
    Since every irreducible representation is $1$-dimensional, each $n_\rho = 1$. This result is itself new. 
    It would be interesting to find an algorithmic way of deriving this result.
\end{example}

The main theorem is proved in two steps. The first is to extract the polynomial part, and the second is the computation of the constant $H_G$, which could potentially vanish. The next subsection establishes its non-vanishing and derives a nice closed formula for $H_G$ by explicit computation of the hyperdeterminant at matrix multiplication tensor.

\subsection{Matrix algebra}

Let the \textit{shifted ($d$-)iterated matrix multiplication tensor} $\mathsf{M}^{(d)}_{X} \in (\mathbb{C}^{n^2})^{\otimes d}$, where $X = (x_{ij}) \in \mathrm{End}(\mathbb{C}^{n})\cong \mathbb{C}^{n^2}$ is the variable matrix, be the tensor
$$    
(\mathsf{M}^{(d)}_{X})_{(i_1,j_1),\ldots,(i_d,j_d)} = \sum \delta_{j_1=i_2}\,\delta_{j_2=i_3}\,\cdots\,\delta_{j_{d-1}=i_d} X_{i_1 j_d} 
 E_{(i_1,j_1)} \otimes \cdots \otimes E_{(i_d,j_d)},
$$
where summation runs over all $d$-tuples of pairs $(i_k,j_k) \in [n]^2$ and $\delta_{x=y}$ is $1$ if $x = y$ and $0$ otherwise. Here, $E_{(i,j)}$ are standard matrix units. 
Alternatively, this tensor has the nice dual form
    $$
        (\mathsf{M}^{(d)}_X)^*(A_1,\ldots,A_d) = \mathrm{Tr}(X^t A_1\cdots A_d).
    $$
Also set $\mathsf{M}^{(d)}_{n}:=\mathsf{M}^{(d)}_{\mathrm{ID}_n}$ and note that $\mathsf{M}^{(d)}_{n}$ is the multiplication tensor of $(d-1)$ matrices $\mu^{(d-1)}_{\mathrm{Mat}_{n\times n}}$. 

The matrix multiplication tensor has been extensively studied from the point of view of (geometric) complexity theory, which seeks faster algorithms for matrix multiplication \cite{blas, land}. In particular, algorithms with fewer than $O(n^3)$ multiplications (but not the best known) were obtained in the framework of the Cohn--Umans program by studying the group algebra tensor $\mu^{(2)}_G$ \cite{cohn}. 

To prove Theorem~\ref{th:main}, we show in fact that $C^G(x) \cong \bigoplus \mathsf{M}^{(d)}_{X_{\rho}}$, where $X_{\rho} = \sum \rho(g) x_g$; see Theorem~\ref{th:block}.
The constants $H_G$ are in fact certain differences of `positive' and `negative' objects, and it is frequently a mystery why the difference is not zero.
This issue is resolved by explicitly computing the hyperdeterminant at the tensor $\mathsf{M}_n^{(d)}$. 
\begin{theorem}\label{th:mmt}
    For even $d$ and any $n$:
    $$
        \dett_d(\mathsf{M}^{(d)}_{n}) =  \pm\left(
            \prod_{i=0}^{n-1} \frac{(n+i)!}{i!}
            \right)^{\frac{d}{2} - 1}.
    $$
\end{theorem}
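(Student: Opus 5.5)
The plan is to exploit the cyclic ``chain'' structure of $\mathsf{M}^{(d)}_n$: it is a tensor product of $d$ identity matrices glued cyclically. Write $V_k := W_k\otimes W_{k+1}^*$ for the $k$-th mode, with $W_1,\ldots,W_d$ copies of $\mathbb{C}^n$ and indices taken cyclically ($W_{d+1}=W_1$). Then
$$
\mathsf{M}^{(d)}_n=\bigotimes_{k=1}^d \mathrm{ID}_{W_{k+1}},
$$
where the $k$-th identity $\mathrm{ID}_{W_{k+1}}\in W_{k+1}^*\otimes W_{k+1}$ occupies the second slot of mode $k$ and the first slot of mode $k+1$. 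Entrywise this is the condition $j_k=i_{k+1}$ for all $k$ cyclically, since $X=\mathrm{ID}$ forces $i_1=j_d$.

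First I rewrite \eqref{eq:hdet} in invariant form. Let $N=n^2$. Then
$$
\dett_d(T)=\frac{1}{N!}\,\big\langle \omega_1\otimes\cdots\otimes\omega_d,\ T^{\otimes N}\big\rangle ,
$$
where $\omega_k\in\Lambda^{N}V_k^*$ is the standard volume form, viewed inside $(V_k^*)^{\otimes N}$ as $\sum_{\sigma}\sgn(\sigma)\,e^*_{\sigma(1)}\otimes\cdots\otimes e^*_{\sigma(N)}$.

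The key input is Cauchy's formula. For $n$-dimensional $A,B$,
$$
\Lambda^{n^2}(A\otimes B)=S_{\square}A\otimes S_{\square'}B,
$$
where $\square$ is the $n\times n$ square and $\square'=\square$ is its conjugate. This is one-dimensional, equal to $\det(A)^{n}\det(B)^{n}$. I will use this factorization as follows.
\begin{itemize}
\item Each $\omega_k$ factors as the image of a $\square$-isotypic vector $u_k\in (W_k^*)^{\otimes N}$ tensored with a $\square'$-isotypic vector $v_{k+1}\in (W_{k+1})^{\otimes N}$, antisymmetrized over the diagonal $S_N$.
\item Pairing against $T^{\otimes N}$ contracts, for each $k$, the $W_{k+1}$-part of $\omega_k$ with the $W_{k+1}^*$-part of $\omega_{k+1}$ through $N$ copies of the identity.
\end{itemize}
Consequently the whole number is a trace over $(\mathbb{C}G)$-free data: it equals a product over the $d$ ``gluing'' positions of one and the same scalar $c_n$, up to sign. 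That scalar is the pairing of two Young-symmetrizer-type vectors for $\square$ and $\square'$. The global $S_N$-antisymmetrization contributes the normalization. Because $d$ is even, these symmetrizers compose around the cycle into a $d/2$-fold power of a single idempotent-type element. I expect the count to come out as $\dett_d=\pm\, c^{d/2}/c$ with $c=N!/f^{\square}$. By the hook length formula,
$$
c=\prod_{\text{cells}}h(\text{cell})=\prod_{i=0}^{n-1}\frac{(n+i)!}{i!}.
$$
This gives the claimed exponent $\frac d2-1$.

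To pin down the normalization and sign I will check two cases.
\begin{itemize}
\item For $d=2$, $\mathsf{M}^{(2)}_n$ is the permutation matrix of the transpose map on $[n]^2$, so $\dett_2=\pm1$. This is consistent with the exponent $0$.
\item I will also verify $n=1$ and $n=2$, $d=4$ directly from the combinatorial description. After fixing $\sigma_1$ (absorbing the $1/N!$), the terms are indexed by cyclic $d$-tuples of functions $a_1,\ldots,a_d:[N]\to[n]$ such that every consecutive pair $(a_k,a_{k+1})$ is a bijection onto $[n]^2$. For $d=4$ these are essentially pairs of orthogonal-array structures, with a sign. This fixes the exact constant $c$ rather than a multiple of it.
\end{itemize}

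The main obstacle is the middle step: making the factorization rigorous and computing $c$ exactly. I must show that the signed contraction of the $\square$-component of $\Lambda^N(W_k\otimes W_{k+1}^*)$ with that of $\Lambda^N(W_{k+1}\otimes W_{k+2}^*)$ produces exactly the scalar $N!/f^{\square}$ per step, rather than some other multiple. My first attempt will model each $\omega_k$ as $\sum_\sigma \sgn(\sigma)\,\sigma\cdot(\text{fixed basis word})$. I will then reduce the cyclic contraction to an element of the group algebra $\mathbb{C}S_N$ of the form $(e_\square)^{d/2}$, where $e_\square$ is the (unnormalized) central idempotent of $\square$ up to the sign twist. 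Evaluating its coefficient at the identity via the relation $e_\square^2=\frac{N!}{f^\square}e_\square$ then gives the power $c^{d/2-1}$ after dividing by $N!$.
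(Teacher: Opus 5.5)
Your setup is sound. The cyclic-chain form of $\mathsf{M}^{(d)}_n$, the invariant formula for $\dett_d$, the reduction to cyclic tuples of words $a_1,\dots,a_d$ with every $(a_k,a_{k+1})$ a bijection onto $[n]^2$, and the observation that only $\lambda=(n^n)$ survives in $\Lambda^{N}(A\otimes B)$ are all correct. But the step that actually produces the number is missing, and the mechanism you sketch for it gives the wrong value.

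First, the contraction is not ``a product over the $d$ gluings of one and the same scalar''; that would be a $d$-th power of a constant, whereas the quantity is a trace of a $d$-th power. Second, the unnormalized central idempotent $E=\sum_g\chi^\lambda(g)g$ does satisfy $E^2=\frac{N!}{f^\lambda}E$, but its identity coefficient is $f^\lambda$, not $1$. So $E^{d/2}(\mathrm{id})/N!=h_\lambda^{d/2-2}$, off by a factor $h_\lambda=N!/f^\lambda$. Checking $n\le 2$ cannot repair this, since it does not determine the constant for general $n$.

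The element that actually arises, as the paper shows, is the non-central Young symmetrizer. Fix $\sigma_1=\mathrm{id}$, which absorbs the $1/N!$. Write $\sigma_{i-1}=\pi_{i-1}\tau\sigma_i$ with $\tau$ the transpose of $[n]^2$ and $\pi_{i-1}$ in the column group $C$ of the square tableau. Conjugate every second $\pi_i$ by $\tau$ into the row group $R$. The signed count is then the identity coefficient of $c_\lambda^{d/2}$, where $c_\lambda=\sum_{g\in C}g\cdot\sum_{h\in R}\sgn(h)h$. Since $R\cap C=1$ gives $c_\lambda(\mathrm{id})=1$ and $c_\lambda^2=h_\lambda c_\lambda$, the count is $h_\lambda^{d/2-1}$.

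Alternatively, you can close the gap inside your own representation-theoretic framework, and this is what makes your guess $h_\lambda^{d/2}/h_\lambda$ correct. Let $Q$ be the operator on the permutation module $M=M^{(n^n)}$ spanned by the words. Set $Q_{a,b}=\sgn(a,b)$ if $p\mapsto(a(p),b(p))$ is a bijection onto $[n]^2$ (sign relative to the lexicographic order), and $Q_{a,b}=0$ otherwise. Then $N!\,\dett_d(\mathsf{M}^{(d)}_n)=\mathrm{tr}(Q^d)$, and $Q$ is an $S_N$-equivariant map $M\to M\otimes\sgn$.

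Since $S^\mu$ occurs in $M$ only if $\mu$ dominates $\lambda$, the only $\mu$ for which both $S^\mu$ and $S^{\mu'}$ occur in $M$ is $\mu=\lambda$, with multiplicity one. This is the $S_N$-side of your Cauchy observation. Hence $Q$ kills every isotypic component except the copy of $S^\lambda$, on which $Q^2$ acts by a scalar $\kappa$, so $\mathrm{tr}(Q^d)=f^\lambda\kappa^{d/2}$.

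For $d=2$, each of the $N!$ bijections $(a,b)$ contributes $Q_{a,b}Q_{b,a}=\sgn(\tau)$, so $f^\lambda\kappa=\sgn(\tau)N!$. By the hook length formula, $\kappa=\pm h_\lambda$, and therefore $\dett_d(\mathsf{M}^{(d)}_n)=f^\lambda\kappa^{d/2}/N!=\pm h_\lambda^{d/2-1}$. Thus your $d=2$ computation is the normalization step of the argument, not merely a consistency check.
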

The result is obtained using powers of the \textit{Young symmetrizers} $c_\lambda$, the idempotents of the group algebra $\mathbb{C}S_n$, which is somewhat surprising since the initial problem does not appear to involve the symmetric group. Also, the theorem is the first instance of nonvanishing of $\mathrm{M}_n^{(d)}$ (or $\mathrm{M}_X^{(d)}$) for a hyperdeterminant, albeit only for even $d$. The value is exactly $h_{n \times n}$ -- the product of hooks in a square.

\vspace{0.5em}
We will in fact prove a more general result.

\subsection{Semisimple algebras}
Let $A$ be a finite-dimensional associative semisimple algebra with a fixed basis $\{a_1,\ldots,a_n\}$ and product rule $a_{i_1} \cdot \ldots \cdot a_{i_d}  = \sum \mu^k_{i_1 \ldots i_d} a_k$, where\footnote{Canonically $\mu \in (A^{*})^{\otimes d} \otimes A$, equivalently $\mu \in (\mathbb{C}^n)^d\otimes (\mathbb{C}^n)^*$ as long as the basis and $A^*$ are fixed.} $\mu^k_{i_1,\ldots,i_d} \in (\mathbb{C}^n)^{\otimes (d+1)}$. Define the \textit{algebra tensor (hypermatrix)}
to be the following $n^{\times d}$ hypermatrix $$
    C^A_{i_1,\ldots,i_d}(x) := \sum \mu^k_{i_1,\ldots,i_d} x_k.
$$ 
Alternatively $C^A = (\ell_x(a_{i_1}\cdots a_{i_d}))_{i_k \in [n]}$ for a generic linear form $\ell_x(a) = \sum x_i a^*_i: A \to \mathbb{C}$. 

Let $\widehat{A}$ be the set of representatives $\rho: A \to \mathrm{End}(V_\rho)$ of isomorphism classes of irreducible representations (or simple modules), see \cite{etingof}. Set $n_\rho = \dim V_\rho$ and let $E_{ij}^{\rho} \in \mathrm{End}(V_\rho) \cong \mathbb{C}^{n_\rho^2}$ be the matrix-units basis. Fix a Wedderburn algebra isomorphism
    $\Phi:=\bigoplus_{\rho \in \widehat{A}} \rho: A \to \bigoplus_{\rho \in \widehat{A}} \mathrm{End}(V_\rho)$.

\begin{theorem}\label{th:alg}
Let $A$ be semisimple algebra with the given basis $\{a_1,\ldots, a_n \}$. Then,
    $$
        \dett_d(C^A(x)) 
        = H_{A} \cdot \prod_{\rho \in \widehat{A}} \left(\ \det(y_{ij}^{\rho})\ \right)^{n_\rho},
    $$
    where $y_{ij}^{\rho} := \ell_x(U x)$ 
    and $H_A = \det(\Phi)^d (n^n \prod_{\rho \in \widehat{A}}h_{n_\rho \times n_\rho}/n_\rho^{n_\rho})^{d/2-1}$. In particular, each polynomial factor does not depend on the choice of matrix units in $\mathrm{End}(V_\rho)$.
\end{theorem}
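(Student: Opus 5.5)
The plan is to transport $C^A(x)$ along the Wedderburn isomorphism to a block tensor, and then compute the hyperdeterminant block by block using Theorem~\ref{th:mmt}.

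\emph{Change of basis.} Let $P$ be the $n\times n$ matrix of $\Phi$ from the basis $\{a_i\}$ to the matrix-unit basis $\{E^\rho_{ij}\}$ of $B:=\bigoplus_\rho \mathrm{End}(V_\rho)$. The hypermatrix $C^A(x)$ is the multilinear form $(u_1,\dots,u_d)\mapsto \ell_x(u_1\cdots u_d)$ written in the basis $\{a_i\}$. Since $\Phi$ is an algebra map, this form equals $(b_1,\dots,b_d)\mapsto \ell_x\Phi^{-1}(b_1\cdots b_d)$ evaluated on $b_k=\Phi(u_k)$. So $C^A(x)$ is obtained from the analogous hypermatrix $C^B(y)$ of $B$ by acting with $P$ in each of the $d$ modes. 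Here $y$ denotes the linear form $\ell_x\circ\Phi^{-1}$ on $B$, with coordinates $y^\rho_{ij}=\ell_x(\Phi^{-1}E^\rho_{ji})$; this is what the statement calls $\ell_x(Ux)$. Cayley's hyperdeterminant is $\mathrm{GL}_n^{\times d}$-equivariant, $\det_d((g_1,\dots,g_d)\cdot X)=\prod_k\det(g_k)\det_d(X)$, which follows directly from \eqref{eq:hdet} by multilinearity and antisymmetry in each mode. Hence
\[
\det_d(C^A(x))=\det(\Phi)^d\,\det_d(C^B(y)).
\]

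\emph{Block decomposition.} In the matrix-unit basis the product $E^{\rho_1}_{i_1j_1}\cdots E^{\rho_d}_{i_dj_d}$ vanishes unless all $\rho_k$ coincide. Therefore $C^B(y)$ is a direct sum over $\rho$ of $d$-way diagonal blocks, and the block for $\rho$ is exactly $\mathsf{M}^{(d)}_{Y_\rho}$ with $Y_\rho=(y^\rho_{ij})$, up to the transpose convention $\mathrm{Tr}(Y^tA_1\cdots A_d)$. For even $d$ the hyperdeterminant is multiplicative on block-diagonal hypermatrices, up to a sign from reordering the rows. This is a standard fact: a permutation tuple in \eqref{eq:hdet} gives a nonzero term only if all $\sigma_k$ map the same columns into each block, and the $1/n!$ normalisation recombines correctly after fixing $\sigma_1$. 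This gives
\[
\det_d(C^B(y))=\pm\prod_\rho \det_d\bigl(\mathsf{M}^{(d)}_{Y_\rho}\bigr).
\]

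\emph{Single block.} When $Y$ is invertible, $\mathsf{M}^{(d)}_{Y}$ is obtained from $\mathsf{M}^{(d)}_{n_\rho}$ by right multiplication by $Y^t$ in the last mode, since $\mathrm{Tr}(Y^tA_1\cdots A_d)=\mathrm{Tr}(A_1\cdots A_{d-1}(A_dY^t))$. The operator $A\mapsto AY^t$ on $\mathrm{Mat}_{n_\rho}$ has determinant $\det(Y)^{n_\rho}$. So $\det_d(\mathsf{M}^{(d)}_{Y_\rho})=\det(Y_\rho)^{n_\rho}\det_d(\mathsf{M}^{(d)}_{n_\rho})$, and this polynomial identity extends to all $Y$ by Zariski density. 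Theorem~\ref{th:mmt} then gives $\det_d(\mathsf{M}^{(d)}_{n_\rho})=\pm h_{n_\rho\times n_\rho}^{d/2-1}$.

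\emph{Assembling the constant.} The product of these block values gives $\pm\prod_\rho h^{d/2-1}_{n_\rho\times n_\rho}$. The remaining factor $(n^n\prod_\rho n_\rho^{-n_\rho})^{d/2-1}$ in $H_A$ must be explained by the normalisation. If $\det_d$ is taken with the $1/n!$ prefactor, the block decomposition produces a multinomial-type correction. I expect this to be the main obstacle, together with the signs: checking the block-multiplicativity normalisation and matching exactly the factor $n^n/\prod n_\rho^{n_\rho}$. I would first verify this on the commutative case $A=\mathbb{C}^n$, where all blocks are $1\times1$, to fix the convention.

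Independence of the choice of matrix units follows at the end. Changing the units by conjugation $Y_\rho\mapsto gY_\rho g^{-1}$ leaves $\det Y_\rho$ unchanged.
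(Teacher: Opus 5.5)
Your route is the same as the paper's. You transport $C^A(x)$ along the Wedderburn isomorphism and split $C^B(y)$ into the blocks $\mathsf{M}^{(d)}_{Y_\rho}$. You then extract $\det(Y_\rho)^{n_\rho}$ with a one-mode multiplication operator: you multiply on the right in the last mode, while Lemma~\ref{lemma:trace} multiplies on the left in the first. Finally you apply Theorem~\ref{th:mmt} and conjugation invariance. Everything you actually carry out is correct, and it lands exactly where the paper's proof lands:
\[
\det_d(C^A(x))=\det(\Phi)^d\prod_{\rho}\bigl(\pm h_{n_\rho\times n_\rho}^{d/2-1}\bigr)\det(Y_\rho)^{n_\rho}.
\]
The paper's final display is $\det(U)^d\prod_\rho \det_2(y^\rho)^{n_\rho}h_{n_\rho\times n_\rho}^{d/2-1}$, with no further factor. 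The block step also needs no sign. Proposition~\ref{prop:direct} is exact, and making the blocks contiguous applies one permutation in all $d$ modes at once, contributing $\sgn(\pi)^d=1$.

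The step that would fail is your plan to recover the remaining factor $(n^n\prod_\rho n_\rho^{-n_\rho})^{d/2-1}$ from the $1/n!$ normalisation. For even $d$ the normalised sum equals the sum over tuples with $\sigma_1=\mathrm{id}$, which factors exactly over a direct sum, so no multinomial correction exists. More importantly, the factor is not correct, and the test case you propose shows it. For $A=\mathbb{C}^n$ with the basis of orthogonal idempotents, $C^A(x)$ is the diagonal hypermatrix with diagonal $x_1,\dots,x_n$, so $\det_d(C^A(x))=\prod_i x_i$. Here $\Phi=\mathrm{id}$, $y=x$, $n_\rho=1$ and $h_{1\times1}=1$, so $H_A=1$. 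The stated $H_A$ is instead $n^{n(d/2-1)}$, which differs from $1$ for $n\ge2$, $d\ge4$.

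The factor is a leftover from the group case. There the unitary Fourier matrix $U=\mathrm{diag}(\alpha_\rho^{-1/2})\,\Phi$ is not an algebra map: it rescales each block by $\alpha_\rho^{d/2-1}$, while the factors are written in the unscaled $\rho(x)$. With a genuine algebra isomorphism $\Phi$ and $y=\ell_x\circ\Phi^{-1}$ this scaling never appears. So do not try to match the stated constant. The correct conclusion is $H_A=\pm\det(\Phi)^d\prod_\rho h_{n_\rho\times n_\rho}^{d/2-1}$, which is nonzero. The sign comes from Theorem~\ref{th:mmt}, and the statement also drops it.
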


This result refines the previous theorems (up to a constant): if $A = \mathbb{C}S_n$ and $\{a_k\} = S_n$, then $C^A(x) = C^G(x)$, and coordinates $y_{ij}^{\rho}$ represent scaled Fourier coordinates.
If $A = \mathrm{Mat}(n, \mathbb{C})$ and $\{a_k\} = \{E_{ij}\}$, then $C^A(x) = \mathsf{M}_X^{(d)}$. The downside of this statement is that we do not have a canonical basis of $A$, so we have to bring a concrete $\Phi$ into the picture. Also, the constant $H_A$ is determined only up to a factor: it depends on the chosen basis (through the constant $\beta$), but still $H_A \neq 0$. Note that if $\Phi$ is an automorphism (i.e. the $a$-basis is conjugate-equivalent to the matrix units by the Skolem--Noether theorem), then $\det(\Phi) = \pm 1$. It is the only concrete invariant known to be nonvanishing on a semisimple tensor, yet only for even $d$.

\subsection{Nonsemisimple algebras}
The algebra hyperdeterminant is a relative invariant with respect to the group $\mathrm{GL}(A)$, so its vanishing does not depend on the basis of $A$ that we choose; see Lemma~\ref{lemma:basis}. Frobenius' criterion states that $\det_2(C^A(x)) \neq 0$ (as a polynomial) if and only if the algebra is Frobenius, i.e. for some $x$ the associated bilinear form $ab \mapsto \ell_x(ab)$ is nondegenerate; alternatively, there exists an associative scalar product.
It is referred to as \textit{parastrophic determinant} of $A$ in the works of Brauer, Nakayama and Nesbitt, see also \cite{stein} (and references therein) for alternative extensions of the notion. 

We show that the algebra hyperdeterminant vanishes if the algebra is not semisimple, i.e. if its Jacobson radical is nonzero. This shows that the semi-invariant is more restrictive for $d > 2$.
\begin{theorem}
    Let $A$ be an associative finite dimensional algebra. For $d > 2$, 
    $\det_d(C^A(x)) \neq 0$ if and only if the algebra $A$ is semisimple.
\end{theorem}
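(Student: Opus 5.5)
The proof has two directions. The \emph{if} direction follows from Theorem~\ref{th:alg}. For $A$ semisimple, $H_A\neq 0$: $\det(\Phi)\neq0$ because $\Phi$ is an isomorphism, and the remaining factor is a product of nonzero hook products. Each factor $\det(y^\rho_{ij})$ is a nonzero polynomial, since the $y^\rho_{ij}$ are linearly independent linear forms in $x$; this independence again uses that $\Phi$ is invertible. Hence $\det_d(C^A(x))\neq 0$.

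For the \emph{only if} direction, suppose the Jacobson radical $J=J(A)\neq 0$. We show that $\det_d(C^A(x))$ vanishes identically. By Lemma~\ref{lemma:basis}, vanishing does not depend on the basis, since a change of basis multiplies $\det_d$ by $\det(g)^d$. So we choose a basis adapted to the radical filtration $A\supset J\supset J^2\supset\cdots\supset J^m=0$, with $J^{m-1}\neq0$. Each entry of $C^A(x)$ equals $\ell_x(a_{i_1}\cdots a_{i_d})$. This entry vanishes whenever the total radical degree of the factors is at least $m$, since then the product lies in $J^m=0$. The plan is to show that every term $\prod_{i}C_{\sigma_1(i),\dots,\sigma_d(i)}$ in \eqref{eq:hdet} contains such a zero entry, by counting.

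Let $r_k=\dim J^k/J^{k+1}$ for $k\ge1$ and $r_0=\dim A/J$. Give each basis vector its level $k$, and let $N=\sum_k k\,r_k$ be the total level sum. In any term, each index appears exactly once in each of the $d$ modes. So the levels summed over all $n$ entries of the term equal $dN$, and some entry therefore has level sum at least $dN/n$. If $dN/n\ge m$, that entry vanishes and every term is zero. For $d>2$ we want $dN\ge mn$.

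This inequality fails in general; a large semisimple part can make $N/n$ small. So the averaging must be refined. One way is to split $A/J\cong\bigoplus\mathrm{End}(V_\rho)$ and use Peirce decomposition with idempotents $e_\rho$. A product $e\,j\,e'$ in $J$ interacts only with matching idempotents, so each entry is nonzero only when consecutive factors satisfy compatibility conditions. The refined counting then combines these compatibility constraints with the level count, in the spirit of Barvinok's matroid intersection criterion. The goal is to show that no transversal system of $d$ permutations avoids the zero pattern.

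An alternative route to the same vanishing is to degenerate: scale the radical by $t$, i.e.\ apply $g_t=\mathrm{id}_{A/J}\oplus t\cdot\mathrm{id}$ on a complement. Then $\det_d(C^A)\,t^{d\dim J}$ equals $\det_d$ of the rescaled tensor, which is a polynomial in $t$ whose entries have $t$-degree at least the number of radical factors. Comparing the minimal $t$-degree, which is at least a weighted count in which each radical index contributes its level in every mode, with the exact degree $d\dim J$ yields vanishing once the minimal degree exceeds it. Making this strict requires the extra factor $\dim J\cdot(\text{something})$ that is available when $d>2$.

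The main obstacle is this combinatorial step. We must show that the $d$-dimensional zero pattern forced by $J$ admits no ``perfect transversal''. Equivalently, a permanent-type support argument must kill every term, and the argument has to explain why $d=2$ is exempt, since Frobenius non-semisimple algebras have nonzero parastrophic determinant. I would first test the argument on the dual numbers $\mathbb{C}[\epsilon]/\epsilon^2$ and on upper triangular $2\times2$ matrices, and extract the general pattern from those cases.
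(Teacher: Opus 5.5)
Your \emph{if} direction matches the paper: Theorem~\ref{th:alg}, $H_A\neq 0$, and linear independence of the $y^\rho_{ij}$. The \emph{only if} direction, however, is not proved, and neither repair you sketch can work as stated.

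\textbf{Why your two routes fail.} The averaging argument covers only the case $dN>n(m-1)$, as you note yourself. The $t$-scaling route is vacuous. Suppose a diagonal torus element (weighted by radical membership or by level) acts on all $d$ modes. Each index occurs once per mode in every term of the expansion of $\det_d(g_t^{\otimes d}\cdot C^A(x))$. So every term has $t$-degree \emph{exactly} $d\dim J$ (resp.\ $dN$), and no strict excess can ever appear. A strict excess needs an \emph{output} mode that can be rescaled inversely. In $C^A(x)$ the output has already been contracted with $\ell_x$.

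\textbf{The paper's proof has the same defect.} Its exponent $w_k-\sum_{\ell=1}^d w_{i_\ell}\ge(d-1)D$ contains a factor $t^{w_k}$. That factor comes from rescaling the variables $x$, as in Lemma~\ref{lemma:basis}, not from $\mathrm{SL}(n,\mathbb{C})^{\times d}$ acting on the fixed tensor $C^A(x)$. The argument never uses $d>2$. It would therefore give $\det_2(C^A(x))\equiv 0$ for $\mathbb{C}[\epsilon]/(\epsilon^2)$, where in fact $\det_2(C^A(x))=-x_2^2$.

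\textbf{The missing idea} is a pointwise dichotomy on the bilinear form $\beta_x(a,b)=\ell_x(ab)$.

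If $\beta_x$ is degenerate, choose $v\neq 0$ with $\ell_x(vA)=0$. Then $C^A(x)(v,b_2,\dots,b_d)=0$. After a change of basis in mode~$1$ the hypermatrix has a zero slice, so $\det_d(C^A(x))=0$.

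If $\beta_x$ is nondegenerate, write $C^A(x)(b_1,\dots,b_d)=\beta_x(b_1\cdots b_{d-1},b_d)$. Let $\mu^{(d-1)}_A$ be the hypermatrix $(\mu^k_{i_1\dots i_{d-1}})$ of the $(d-1)$-fold product, with $k$ as last index, and let $(F_x)_{kj}=\ell_x(a_ka_j)$, which is invertible. In coordinates,
\[
C^A(x)=(I,\dots,I,F_x^{T})\cdot\mu^{(d-1)}_A,
\]
so $\det_d(C^A(x))=\det(F_x)\det_d(\mu^{(d-1)}_A)$. Now set $w_i=n\nu(i)-D$ and act by $\mathrm{diag}(t^{-w})$ on the $d-1$ input modes and by $\mathrm{diag}(t^{w})$ on the output mode. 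This element lies in $\mathrm{SL}(n,\mathbb{C})^{\times d}$. It multiplies the entry at $(i_1,\dots,i_{d-1},k)$ by $t^{e}$, where
\[
e=n\bigl(\nu(k)-\textstyle\sum_{\ell<d}\nu(i_\ell)\bigr)+(d-2)D .
\]
Since $\nu(k)\ge\sum_{\ell<d}\nu(i_\ell)$ whenever $\mu^k_{i_1\dots i_{d-1}}\neq 0$, every nonzero entry has $e\ge(d-2)D$. This is positive exactly when $d>2$, using $D>0$. Hence $\mu^{(d-1)}_A$ lies in the null cone and $\det_d(C^A(x))=0$ for every $x$. The drop from $(d-1)D$ to $(d-2)D$, caused by turning one mode into an output via $\beta_x$, is precisely why $d=2$ is exempt, which you correctly flagged as something the argument must explain.
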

The result is obtained by geometric methods; in particular, we show that the $\mathrm{SL}^{\times d}$-orbit closure of $C^A(x)$ contains $0$, which is a stronger statement.
This theorem completes the characterization of the vanishing of the combinatorial hyperdeterminant on associative algebra tensors $C^A(x)$. Similar results hold for iterated multiplication tensors $\mu_A$.
The vanishing of the combinatorial hyperdeterminant is an important open problem with interesting consequences; see \cite{rota, kum, yel, zappa}.

It is known that non-semisimple multiplication tensors are unstable \cite{lys}, i.e. their orbit closure contains the zero tensor (equivalently, they vanish on every hyperdeterminant). In turn, semisimple tensors are semistable (moreover polystable), i.e. they do not vanish on some hyperdeterminant, as follows from the methods of geometric invariant theory. From this point of view, the tensor $C^A(x)$ exhibits similar behavior, although it is slightly more general.

Nevertheless, concrete nonvanishing certificates were not known. For instance, another famous invariant, the \textit{geometric hyperdeterminant} (or GKZ hyperdeterminant), discovered by Gelfand, Kapranov, and Zelevinsky \cite{gkz}, vanishes whenever the tensor is degenerate, which is the case (except for few small cases) for semisimple algebra multiplication tensors (and tensors $C^A(x)$).

The paper is structured as follows. In Section~\ref{sec:pre}, we recall standard facts and properties of the combinatorial hyperdeterminant. In Section~\ref{sec:block}, we prove the block-diagonalization part of the main theorem for the group tensor. In Section~\ref{sec:mat}, we compute the matrix algebra hyperdeterminant (Theorem~\ref{th:mmt}) and complete the proof of the main theorem. In Section~\ref{sec:algebra}, we consider algebra tensors in the semisimple and non-semisimple cases.

\section{Preliminaries}\label{sec:pre}
We denote $[n] = \{1,2,\ldots, n\}$. 

\subsection{Tensors and hypermatrices}\label{sec:tensors} An element of the space $\mathbb{C}^{n} \otimes \cdots \otimes \mathbb{C}^{n} = (\mathbb{C}^{n})^{\otimes d}$ is called a tensor.
The group $G = \mathrm{SL}(n,\mathbb{C}) \times \cdots \times \mathrm{SL}(n,\mathbb{C}) = \mathrm{SL}(n,\mathbb{C})^{\times d}$ naturally acts on the space of tensors $(\mathbb{C}^{n})^{\otimes d}$ by $(g_1,\ldots, g_d) \cdot v_1 \otimes \cdots \otimes v_d = g_1 v_1 \otimes \ldots \otimes g_d v_d$ for $g_i \in \mathrm{SL}(n_i)$ on simple tensors, and the action is extended multilinearly.
The elements of $(\mathbb{C}^{n})^{\otimes d}$ written in a fixed basis in each mode correspond to hypermatrices $(X_{i_1,\ldots, i_d})$ indexed by $(i_1,\ldots, i_d) \in [n]^d$, and we shall usually identify tensors in $(\mathbb{C}^{n})^{\otimes d}$ with the corresponding hypermatrices.

\subsection{Hyperdeterminants}
A. Cayley regarded all $\mathrm{SL}(n,\mathbb{C})^{\times d}$-invariant (homogeneous) polynomials over tensors as \textit{hyperdeterminants}. 
The combinatorial hyperdeterminant, defined in \eqref{eq:hdet}, has a nice combinatorial formula and also inherits several properties of the ordinary determinant that are important for us and that we list below.

\begin{proposition}[SL-invariance]\label{prop:inv}
    For even $d$ and arbitrary $n$, we have $$\dett_d((g_1,\ldots,g_d) \cdot X) = \det(g_1)\cdots\det(g_d)\dett_d(X)$$ for any $X \in (\mathbb{C}^n)^{\otimes d}$ and $g = (g_1, \ldots, g_d) \in \mathrm{GL}(n,\mathbb{C})^{\times d}$.
\end{proposition}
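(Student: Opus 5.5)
The plan is to prove the identity one mode at a time, since the action of $(g_1,\ldots,g_d)$ factors as the composition of the actions of $(g_1,\mathrm{ID},\ldots,\mathrm{ID})$, $(\mathrm{ID},g_2,\mathrm{ID},\ldots,\mathrm{ID})$, and so on. Applying the single-mode statement $d$ times gives the product $\det(g_1)\cdots\det(g_d)$. Moreover, the formula \eqref{eq:hdet} is symmetric under permuting the $d$ modes: permuting modes just relabels the $\sigma_k$ in the sum. So it suffices to treat $g=(g_1,\mathrm{ID},\ldots,\mathrm{ID})$ with $g_1\in\mathrm{GL}(n,\mathbb{C})$ arbitrary.

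For this case, write $Y=g\cdot X$, so that
$$Y_{i_1,i_2,\ldots,i_d}=\sum_{k} (g_1)_{i_1 k}\, X_{k,i_2,\ldots,i_d}.$$
Fix $\sigma_2,\ldots,\sigma_d$ in \eqref{eq:hdet} and consider the partial sum over $\sigma_1$:
$$F(Y):=\sum_{\sigma_1\in S_n}\sgn(\sigma_1)\prod_{i=1}^n Y_{\sigma_1(i),\sigma_2(i),\ldots,\sigma_d(i)}.$$
This is exactly the ordinary determinant of the $n\times n$ matrix $M(Y)$ with entries $M(Y)_{a,i}=Y_{a,\sigma_2(i),\ldots,\sigma_d(i)}$, with rows indexed by $a$ and columns by $i$. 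Indeed, expanding $\det M(Y)=\sum_{\sigma_1}\sgn(\sigma_1)\prod_i M(Y)_{\sigma_1(i),i}$ recovers $F(Y)$.

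The transformation acts on rows by $M(Y)=g_1M(X)$. Multiplicativity of the ordinary determinant then gives $\det M(Y)=\det(g_1)\det M(X)$. Multiplying by $\sgn(\sigma_2)\cdots\sgn(\sigma_d)/n!$ and summing over $\sigma_2,\ldots,\sigma_d$ yields $\dett_d(Y)=\det(g_1)\dett_d(X)$.

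There is no real obstacle; the only point needing care is confirming that the first index of the product is permuted by $\sigma_1$ while the column label $i$ stays fixed. This is exactly what makes the inner sum a determinant in the row variable. Neither evenness of $d$ nor the $1/n!$ normalization is needed for the identity itself. Evenness only matters for $\dett_d$ to be nonzero, since for odd $d$ both sides vanish.
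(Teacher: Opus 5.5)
Your proof is correct and complete. The paper does not actually prove this proposition. It is recorded as a standard property, and the only justification offered is the later remark that $\dett_d$ is linear in the slices of each mode, just as $\dett_2$ is linear in rows and columns. Your argument is the standard way to make that remark precise:
\begin{itemize}
\item for fixed $\sigma_2,\ldots,\sigma_d$, the inner sum over $\sigma_1$ is the ordinary determinant of $M(X)_{a,i}=X_{a,\sigma_2(i),\ldots,\sigma_d(i)}$;
\item the first-mode action is left multiplication, $M(g\cdot X)=g_1M(X)$;
\item symmetry of the definition under permuting modes, together with factoring $(g_1,\ldots,g_d)$ into single-mode actions, gives the full factor $\det(g_1)\cdots\det(g_d)$.
\end{itemize}
You could also skip the symmetry reduction and run the same computation directly in mode $k$, fixing all $\sigma_j$ with $j\neq k$.

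One small inaccuracy in your closing aside: for odd $d$ the sum vanishes only when $n\ge 2$. The cancellation pairs $(\sigma_1,\ldots,\sigma_d)$ with $(\sigma_1\tau,\ldots,\sigma_d\tau)$ for a transposition $\tau$, and no transposition exists when $n=1$, where $\dett_d(X)=X_{1,\ldots,1}$. This does not affect anything, since, as you note, the identity holds for every $d$.
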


Let $Y \in (\mathbb{C}^{a})^{\otimes d}$ and $Z \in (\mathbb{C}^{b})^{\otimes d}$ with $a + b = n$. The \textit{direct sum of tensors} $Y \oplus Z \in (\mathbb{C}^n)^{\otimes d}$ is the tensor that results from placing tensors block diagonally, i.e. $Y$ occupies the subcube $[1,a]^d$ and $Z$ occupies the subcube $[a+1,a+b]^d$ of the full cube $[n]^d$. 

\begin{proposition}\label{prop:direct}
    For any $Y \in (\mathbb{C}^{n_1})^{\otimes d}$ and $Z \in (\mathbb{C}^{n_2})^{\otimes d}$, we have
    $$
        \dett_d(Y \oplus Z) = \dett_d(Y) \cdot \dett_d(Z).
    $$
\end{proposition}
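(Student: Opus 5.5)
We prove Proposition~\ref{prop:direct} directly from the defining formula \eqref{eq:hdet}, by analysing which terms survive for the block-diagonal tensor $X = Y \oplus Z$ with $n = n_1+n_2$. Put $I_1=[1,n_1]$ and $I_2=[n_1+1,n]$. A term indexed by $(\sigma_1,\ldots,\sigma_d)$ is nonzero only if every factor $X_{\sigma_1(i),\ldots,\sigma_d(i)}$ lies in one of the two diagonal subcubes. So for each $i\in[n]$, the indices $\sigma_1(i),\ldots,\sigma_d(i)$ all lie in $I_1$ or all lie in $I_2$.

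Set $S=\sigma_1^{-1}(I_1)$, a set of size $n_1$. The condition says $\sigma_k^{-1}(I_1)=S$ for every $k$. Hence each $\sigma_k$ restricts to a bijection $S\to I_1$ and to a bijection $[n]\setminus S\to I_2$. The first step is therefore to reorganise the sum over surviving terms by the set $S$, which ranges over the $\binom{n}{n_1}$ subsets of size $n_1$.

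Fix $S$ and a reference permutation $\tau_S\in S_n$ with $\tau_S(I_1)=S$. Every $\sigma_k$ in the block for $S$ can be written uniquely as
$$\sigma_k=(\alpha_k\oplus\beta_k)\circ\tau_S^{-1},\qquad \alpha_k\in S_{n_1},\ \beta_k\in S_{n_2}.$$
Then
$$\sgn(\sigma_k)=\sgn(\alpha_k)\,\sgn(\beta_k)\,\sgn(\tau_S).$$
The product of the signs over $k$ picks up $\sgn(\tau_S)^d=1$, since $d$ is even. This is the one point where evenness of $d$ is used.

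Reindex the product $\prod_i$ by $j=\tau_S^{-1}(i)$. The product then splits as
$$\prod_{j\le n_1} Y_{\alpha_1(j),\ldots,\alpha_d(j)}\cdot\prod_{j\le n_2} Z_{\beta_1(j),\ldots,\beta_d(j)},$$
where the $Z$-indices are shifted back to $[n_2]$. Summing over all $\alpha_k$ and $\beta_k$ gives
$$n_1!\,\dett_d(Y)\cdot n_2!\,\dett_d(Z),$$
and this contribution is the same for every $S$.

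Finally, sum over the $\binom{n}{n_1}$ choices of $S$ and divide by $n!$. The total is
$$\frac{1}{n!}\binom{n}{n_1}\,n_1!\,n_2!\,\dett_d(Y)\,\dett_d(Z)=\dett_d(Y)\,\dett_d(Z).$$

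The only delicate points are the bookkeeping: checking that the decomposition $\sigma_k=(\alpha_k\oplus\beta_k)\circ\tau_S^{-1}$ is a bijection onto the terms with a given $S$, and that the sign cancels because $d$ is even.
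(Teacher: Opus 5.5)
Your proof is correct and is essentially the paper's argument. Both proofs note that only rook placements respecting the block structure survive, and then split the signed sum into the product of the two hyperdeterminants. The only difference is bookkeeping: the paper normalizes $\sigma_1=\mathrm{id}$ and drops the $1/n!$, which forces $S=I_1$ immediately. That normalization tacitly uses the same invariance $\sgn(\pi)^d=1$ for even $d$ that you make explicit through $\sgn(\tau_S)^d=1$ and the count $\binom{n}{n_1}n_1!\,n_2!=n!$.
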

\begin{proof}
    Set $X = Y \oplus Z$.
    Let us pack the tuple of permutations $(\sigma_1,\ldots,\sigma_d) \in S_n^d$ figuring in the definition of $\dett_d(X)$ as a map $\sigma: [n] \to [n]^d$, i.e. rook placements in hypercube, where each rook hits the entire slice. 
    Expanding the formula without the factor $(n!)^{-1}$ by forcing $\sigma_1 = \mathrm{id}$
    $$
        \dett_d(X) = \sum_{\sigma: [n+m] \to [n+m]^d, \sigma_1 = \mathrm{id}} \sgn(\sigma) \prod_{i=1}^{n+m}X_{\sigma(i)},
    $$
    where $\sgn(\sigma) = \prod_{\ell} \sgn(\sigma_\ell)$. Note that the term $X_{\sigma(i)} = 0$ vanishes whenever $\sigma \not\in [1,n]^d \cup [n+1,m]^d$. Thus, any $\sigma$ bijectively splits into the pair $\sigma_y:= \sigma([1,n])$ and $\sigma_z:=\sigma([n+1,n+m]) - n$, where $\sigma_y: [n] \to [n]^d$ and $\sigma_z: [m] \to [m]^d$, with $\sgn(\sigma) = \sgn(\sigma_y) \sgn(\sigma_z)$. Hence,
    $$
        \dett_d(Y \oplus Z) = \sum_{\sigma_y:[n]\to[n]^d} \sum_{\sigma_z:[m]\to[m]^d} \sgn(\sigma_y)\sgn(\sigma_z) \prod_{i=1}^n Y_{\sigma_y(i)} \prod_{i=1}^m Z_{\sigma_z(i)} = \dett_d(Y) \cdot \dett_d(Z).
    $$
    This completes the proof.
\end{proof}
There are other nice properties, such as the Cauchy--Binet formula and Laplace expansion by minors, which we will not use in this paper. On the other hand, other hyperdeterminants are usually not multiplicative with respect to the direct-sum operation. From this point of view, the invariant $\dett_d$ is special, since it is the unique one that is linear in the slices, just as $\dett_2$ is linear in rows and columns. It exists only when $d$ is even. When $d$ is odd, this function vanishes trivially, and the minimal degree of an $SL$-invariant polynomial on $(\mathbb{C}^n)^{\otimes d}$ is at least $n\lceil \sqrt{n} \rceil$; see \cite{ay2, bi}. In particular, for $d=3$ and $n = k^2$, the invariant of minimal degree $k^3$ is also unique, which is a `coincidence': the invariant lives on $(\mathbb{C}^{k^2})^{\otimes 3}$, the space where the matrix multiplication tensor lives. Its evaluation on the matrix multiplication tensor is considered in \cite{bi, lzx}, but its vanishing behavior is not known.

\section{Tensor block-diagonalization}\label{sec:block}

In this section, we prove the block diagonalization of the group algebra tensor,
which reflects the fact that the group algebra decomposes into a direct sum of matrix algebras by the Artin--Wedderburn theorem. In particular, we will show that the group algebra tensor $C^G$ decomposes into a direct sum of matrix algebra tensors.
\begin{theorem}[Tensor block-diagonalization]\label{th:block}
    Let $G$ be a finite group of order $n$ and let $d \ge 2$. There is a unitary transformation $U \in \mathrm{U}(n, \mathbb{C})$ such that
    \begin{align}\label{eq:direct}
        (U, \ldots, U) \cdot C^G(x) = \left( \bigoplus_{\rho \in \hat{G}} \gamma_\rho \mathsf{M}^{(d)}_{\rho(x)}\right)
    \end{align}
    for explicit constants $\gamma_\rho \in \mathbb{C}^{\times}$, where $\rho(x) = \sum_g x_g \rho(g) \in \mathrm{End}(\mathbb{C}^{n_\rho})$.
\end{theorem}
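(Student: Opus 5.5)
The plan is to use the Fourier transform on $\mathbb{C}G$ in each of the $d$ tensor factors. First choose $\widehat{G}$ to consist of unitary representations. Define the change of basis $U$ from the group basis $\{g\}$ to the matrix-coefficient basis indexed by triples $(\rho,i,j)$ with $1\le i,j\le n_\rho$:
\[
U_{(\rho,i,j),g} = \sqrt{\tfrac{n_\rho}{n}}\ \overline{\rho(g)_{ji}} \quad\text{(or a similar normalization, with indices arranged to match } \mathsf{M}^{(d)}\text{)}.
\]
The Schur orthogonality relations $\sum_g \rho(g)_{ij}\overline{\sigma(g)_{kl}} = \frac{n}{n_\rho}\delta_{\rho\sigma}\delta_{ik}\delta_{jl}$ show that $U$ is unitary. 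This uses $\sum_\rho n_\rho^2=n$, so $U$ is square.

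Next, act by $(U,\ldots,U)$ on $C^G(x)$ and compute the entry at $((\rho_1,i_1,j_1),\ldots,(\rho_d,i_d,j_d))$:
\[
\sum_{g_1,\ldots,g_d} \prod_k U_{(\rho_k,i_k,j_k),g_k}\, x_{g_1\cdots g_d}.
\]
To evaluate this, substitute $h=g_1\cdots g_d$, eliminating $g_d=(g_1\cdots g_{d-1})^{-1}h$. Then expand $\rho_d(g_d)=\rho_d(g_{d-1})^{-1}\cdots\rho_d(g_1)^{-1}\rho_d(h)$ using unitarity, so that $\overline{\rho_d(g^{-1})}=\rho_d(g)^t$. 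The sum over each free $g_k$ now pairs a matrix coefficient of $\rho_k$ with one of $\rho_d$, and Schur orthogonality forces $\rho_k=\rho_d$ together with the index chain identifications $\delta_{j_1=i_2},\ldots,\delta_{j_{d-1}=i_d}$. What survives is $\sum_h x_h \rho(h)_{\bullet\bullet}$, i.e. an entry of $\rho(x)$ (or of its transpose or conjugate), times a scalar. That scalar is a product of the factors $\sqrt{n_\rho/n}$ and $n/n_\rho$, which defines $\gamma_\rho=(n/n_\rho)^{d-1}(n_\rho/n)^{d/2}$ up to convention. Entries with distinct $\rho_k$ vanish, which gives the block-diagonal direct sum structure.

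The main obstacle is the bookkeeping of the index conventions. The result must match exactly $\mathsf{M}^{(d)}_X$ with $X=\rho(x)$: the chain $\delta_{j_k=i_{k+1}}$ and the terminal factor $X_{i_1 j_d}$. It must not come out as a transposed, conjugated, or cyclically reversed variant. If the naive choice yields $\rho(x)^t$ or $\overline{\rho}$, I would fix it by reindexing $U$ (swapping $i,j$, or using $\rho(g)$ instead of its conjugate), or by replacing $\rho$ with its dual, which runs over $\widehat{G}$ as well. Failing that, I would absorb a permutation of coordinates within each block into $U$; this preserves unitarity. I would first check the case $d=2$ against Theorem~\ref{th:frob} to confirm the conventions, then induct on the number of eliminated variables.
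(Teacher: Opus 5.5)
Your proposal is correct and follows the paper's proof: a Fourier transform $U$ in every mode, elimination of $g_d$ via $h=g_1\cdots g_d$, expansion of $\rho_d(g_d)$ using unitarity, and Schur orthogonality forcing $\rho_k=\rho$ and the chain $j_k=i_{k+1}$, with the same constant $\gamma_\rho=(n/n_\rho)^{d/2-1}$. The index-convention issue you flag disappears with the paper's unconjugated choice $U_{(\rho,i,j),g}=\sqrt{n_\rho/n}\,\rho(g)_{ij}$, which gives exactly $\mathsf{M}^{(d)}_{\rho(x)}$; your first choice $\overline{\rho(g)_{ji}}$ instead yields entries $\gamma_\rho\,\overline{\rho}(x)_{j_1 i_d}\prod_{k=1}^{d-1}\delta_{i_k=j_{k+1}}$ with $\overline{\rho}(x)=\sum_h x_h\overline{\rho(h)}$, which is exactly the kind of variant you anticipated.
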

\begin{proof}
    By the Artin-Wedderburn decomposition, we have:
    $$
        \mathbb{C}G \cong \bigoplus_{\rho \in \hat{G}} \mathrm{End}(\mathbb{C}^{n_{\rho}}), \qquad \mathrm{End}(\mathbb{C}^{n_{\rho}}) = \langle E^{\rho}_{ij} \mid i,j \in [n_\rho]\rangle
    $$
    thus, the desired space $\mathbb{C}G^{\otimes d}$, in which $C$ lives, decomposes as follows
    $$
        C^G \in \mathbb{C}G^{\otimes d} \cong \bigoplus_{\rho_1,\ldots,\rho_d \in \hat{G}} \mathrm{End}(\mathbb{C}^{n_{\rho_1}}) \otimes \ldots \otimes \mathrm{End}(\mathbb{C}^{n_{\rho_d}}),
    $$
    i.e. it decomposes into a direct sum of tensor products of matrix algebras. We regard $x_i \in \mathbb{C}$ here. To turn $C^G$ into this form, we apply the so-called \textit{non-commutative group Fourier transform} to each mode of the tensor $C^G$. Let $U$ be the $|G| \times |G|$ matrix with rows indexed by triplets $(\rho, i, j) \in \hat{G} \times [n_{\rho}]^2$ and columns by $g \in G$:
    \begin{align*}
        U_{(\rho, i, j), g} := \alpha_{\rho}^{-1/2} \cdot \rho(g)_{ij}, \quad \text{where }\alpha_\rho = \frac{|G|}{n_{\rho}}.
    \end{align*}
    Recall the Schur orthogonality in a matrix form. For any triplets $(\rho, i, j)$ and $(\sigma, k, \ell)$:
    \begin{align}\label{eq:orth}
        \sum_{g \in G} \rho(g)_{ij} \widebar{\sigma(g)_{k\ell}} = \delta_{\rho \sigma} \delta_{ik} \delta_{j \ell} \cdot \alpha_\rho,
    \end{align}
    Hence, $U U^* = I$ and $U$ is unitary. In particular, $|\det(U)| = 1$.
    Now, act by $U$ in each tensor component, set the result to be $B := (U, \ldots, U) \cdot C^G$. 
    Let $C^G_h(x) = \sum_{g_1\cdots g_d = h} x_{h} g_1 \otimes\ldots\otimes g_d$, so that $C^{G}(x) = \sum_{h \in G} x_h C^G_h(x)$. Then, $C^G_h(x)$ transforms to
$$
    U^{\otimes d} \cdot C^G_h(x) 
    = \prod_{i=1}^d \alpha^{-1/2}_{\rho_i}\sum_{g_1\cdots g_d = h} x_h \sum_{(\rho_k,i_k,j_k)} \prod_{k=1}^d\rho_k(g_k)_{i_k j_k} \bigotimes_{k=1}^d E^{\rho_k}_{i_k j_k}
$$
    where the sum runs over triplets $(\rho_k,i_k,j_k)$ for each $k = 1, \ldots, d$. Set $\beta := \prod_{i=1}^d \alpha^{-1/2}_{\rho_i}$. Thus, the coordinates of $B$ are
    \begin{align*}
        B_{(\rho_1, i_1, j_1), \ldots, (\rho_d, i_d, j_d)} =
         \sum_{h \in G} x_h \beta \sum_{g_1, \ldots, g_{d-1} \in G} \rho_d( (g_1\cdots g_{d-1})^{-1} h)_{i_d j_d} \prod_{k=1}^{d-1} \rho_k(g_k)_{i_k j_k}.
    \end{align*}
    Denote by $c_h$ the coefficient of $x_h$ in the latter sum. 
    Since $\rho$ is a homomorphism and $\rho(g^{-1})_{ij} = \widebar{\rho(g)}_{ji}$, we expand 
    $$
        \rho_d( (g_1\cdots g_{d-1})^{-1} h)_{i_d j_d} = \sum_{\ell_1,\ldots,\ell_{d-1},\ell_d=i_d} \prod_{k=1}^{d-1}\widebar{\rho_d(g_k)}_{\ell_k,\ell_{k+1}} \cdot \rho_d(h)_{\ell_1, j_d},
    $$
    and proceed to compute the coefficient $c_h$
    \begin{align*}
        c_h &=
        \beta  \sum_{\ell_1,\ldots,\ell_{d}=i_d} 
        \rho_d(h)_{\ell_1 j_d} 
        \sum_{g_1, \ldots, g_{d-1}}  
            \prod_{k=1}^{d-1}
            \widebar{\rho_d(g_k)}_{\ell_k \ell_{k+1}} \rho_k(g_k)_{i_k j_k}
        \\
        &= \beta  \sum_{\ell_1,\ldots,\ell_{d}=i_d} 
        \rho_d(h)_{\ell_1 j_d} 
        \prod_{k=1}^{d-1}
        \left(\sum_{g}
            \widebar{\rho_d(g)}_{\ell_k \ell_{k+1}}
            \rho_k(g)_{i_k j_k}
        \right)
        \\
        &= \beta \alpha^{d-1}_{\rho_d} \sum_{\ell_1,\ldots,\ell_{d}=i_d} 
        \rho_d(h)_{\ell_1 j_d} 
        \prod_{k=1}^{d-1}
            \delta_{\rho_d, \rho_k}
            \delta_{\ell_k, i_{k}}
            \delta_{\ell_{k+1}, j_k}.
    \end{align*}
    By the Schur orthogonality relations \eqref{eq:orth}, the latter term in the sum vanishes unless $\ell_1 = i_1$, $j_1 = i_2$, $j_2 = i_3$, $\ldots$, $j_{d-1} = i_d$ and $\rho_k = \rho$, which reduces the sum to a single term
    $$
        c_h = \alpha_\rho^{d/2-1} \rho(h)_{i_1, j_d} \prod_{k=1}^{d-1} \delta_{j_k, i_{k+1}}
    $$
    for any $h \in G$. Thus, $B_{(\rho_1,i_1,j_1),\ldots,(\rho_d,i_d,j_d)}$ vanishes unless $\rho_k = \rho_1$, which means that $B$ is block-diagonal. There are $|\hat{G}|$ blocks and each block represents $n^2_{\rho} \times \ldots \times n^2_{\rho}$ hypermatrix, which we denote by $B^{\rho}$. 
    If we set the matrix $\rho(x) := \sum_{h\in G} x_h \rho(h)$, then the coordinates are
    $$
        B^{\rho}_{(i_1,j_1),\ldots,(i_d,j_d)} 
        = \alpha_\rho^{d/2-1}\rho(x)_{i_1, j_d} \prod_{k=1}^{d-1} \delta_{j_k, i_{k+1}} 
        = \alpha_\rho^{d/2-1} \mathsf{M}^{(d)}_{\rho(x)}.
    $$
    In particular, $B = \bigoplus_{\rho} B^{\rho} =  \bigoplus_{\rho} \alpha_\rho^{d/2-1}\mathsf{M}^{(d)}_{\rho(x)}$, and $\gamma_\rho = \alpha_{\rho}^{d/2-1}$ as desired.
\end{proof}
\vspace{1em}
\begin{remark}
    For $V := \mathrm{End}(\mathbb{C}^{n_\rho})$, as a multilinear form in $V^{\otimes d}$, a single block is equal to
$$(B^\rho)^*(A^1,\ldots,A^d) = \sum_{i_1,\ldots,i_d,j_d} \rho(x)_{i_1,j_d} A^1_{i_1, i_2}\cdot A^2_{i_2, i_3} \cdots A^d_{i_{d}, j_d} = \mathrm{tr}(\rho(x)^t A^1 \cdots A^d),
$$
i.e. dual form of matrix multiplication tensor.
\end{remark}
\begin{remark}
    The theorem above together with Lemma~\ref{lemma:trace} implies that for any homogeneous hyperdeterminant $\Delta \in \mathbb{C}[(\mathbb{C}^n)^{\otimes d}]^{SL}_{nk}$, we have
    $$
        \Delta(C^G(x)) = \left(\gamma^n \prod_{\rho \in \widehat{G}}\dett_2(\rho(x))^{n_\rho}\right)^{k} \Delta\left(\scalebox{0.9}{$\bigoplus$}\,\mathsf{M}_{n_\rho}\right).
    $$
    Yet even for a single $\mathsf{M}_n$, the computation of $\Delta(\mathsf{M}_n)$ is a nontrivial problem.
\end{remark}

\begin{remark}
    Note that Theorem~\ref{th:block} is true for odd $d$ as well, but in this paper we are interested only in the case of even $d$. The reason is that the invariant ring of odd $d$-way tensors has no linear-degree invariants.
\end{remark}

In the following lemma, we treat the constant $\det(U)^d$ appearing after applying $U$ to $C^G$.
\begin{lemma}\label{lemma:sign}
    For a $|G| \times |G|$ matrix of group Fourier transform $U_{(\rho,i,j), g} = \left(\frac{n_{\rho}}{|G|}\right)^{1/2} \rho(g)_{ij}$, we have
    $$
        \dett_2(U)^2 = (-1)^{(|G| - t)/2 + \sum \binom{n_\rho}{2}}, 
    $$
    where $t = \#\{g \in G: g^{-1} = g\}$ is the number of involutions.
\end{lemma}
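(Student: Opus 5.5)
The idea is to avoid computing $\dett_2(U)$ directly. Instead I would compare $U$ with its complex conjugate $\widebar{U}$, which turns out to be $U$ with its rows and columns permuted. Together with unitarity of $U$ (established in the proof of Theorem~\ref{th:block} via \eqref{eq:orth}), this pins down $\dett_2(U)^2$ as a sign.

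\textbf{Step 1: conjugation is a permutation.} I would use the relation already used in the proof of Theorem~\ref{th:block}: for unitary $\rho$ we have $\widebar{\rho(g)_{ij}} = \rho(g^{-1})_{ji}$. Since the scalar $(n_\rho/|G|)^{1/2}$ is real, this gives
$$
\widebar{U}_{(\rho,i,j),g} = U_{(\rho,j,i),g^{-1}}.
$$
Let $T$ be the permutation matrix on rows sending $(\rho,i,j)\mapsto(\rho,j,i)$, and $\Pi$ the permutation matrix on columns sending $g\mapsto g^{-1}$. Then $\widebar{U} = T\,U\,\Pi$, and taking determinants,
$$
\widebar{\dett_2(U)} = \sgn(T)\,\sgn(\Pi)\,\dett_2(U).
$$

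\textbf{Step 2: use unitarity.} Because $U$ is unitary, $|\dett_2(U)|=1$, so $\widebar{\dett_2(U)} = \dett_2(U)^{-1}$. Substituting into Step~1 gives
$$
\dett_2(U)^2 = \bigl(\sgn(T)\,\sgn(\Pi)\bigr)^{-1} = \sgn(T)\,\sgn(\Pi),
$$
where the last equality holds because both signs are $\pm1$.

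\textbf{Step 3: count the two signs.} This is the only real bookkeeping.
\begin{itemize}[label={}]
\item The permutation $g\mapsto g^{-1}$ is an involution. Its fixed points are the $t$ elements with $g=g^{-1}$ (the identity is counted in $t$), and the remaining $|G|-t$ elements form $(|G|-t)/2$ disjoint transpositions. Hence $\sgn(\Pi) = (-1)^{(|G|-t)/2}$.
\item The row permutation $T$ acts separately on each block of size $n_\rho^2$. Within a block it fixes the diagonal pairs $(i,i)$ and swaps $(i,j)$ with $(j,i)$ for $i<j$, giving $\binom{n_\rho}{2}$ transpositions. Hence $\sgn(T) = (-1)^{\sum_\rho \binom{n_\rho}{2}}$.
\end{itemize}
Multiplying these gives the claimed formula.

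I expect no serious obstacle. The only point needing care is that the identity $\widebar{U}=TU\Pi$ requires every $\rho$ to be unitary, which holds because $\widehat{G}$ was chosen to consist of unitary representations. The argument does not depend on the ordering of rows or columns, since any reordering changes $\dett_2(U)$ by a sign, and squaring removes it.
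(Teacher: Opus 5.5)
Your proof is correct and follows the paper's argument essentially step for step. Both write the conjugate of $U$ as $U$ with rows permuted by $(\rho,i,j)\mapsto(\rho,j,i)$ and columns by $g\mapsto g^{-1}$, use unitarity to identify $\dett_2(U)^2$ with the product of the two permutation signs, and count transpositions. Your attribution of $(-1)^{(|G|-t)/2}$ to the column permutation and $(-1)^{\sum\binom{n_\rho}{2}}$ to the row permutation is the right one; the paper's proof swaps the labels $R$ and $C$ at this point, which is harmless since only the product matters.
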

\begin{proof}
    Since $\rho$ is unitary, we have $\widebar{\rho(g)}_{ij} = \rho(g^{-1})_{ji}$; therefore
    $$
        \widebar{U}_{(\rho,i,j), g} = U_{(\rho,j,i), g^{-1}}.
    $$
    Hence, $\widebar{U}$ is obtained from $U$ by permuting rows and columns. Explicitly, if $C$ is the permutation matrix acting on columns induced by $g \mapsto g^{-1}$ and $R$ is the permutation matrix acting on rows induced by $(\rho,i,j) \mapsto (\rho,j,i)$, then $\widebar{U} = R U C$. Using $U U^* = 1$, we conclude that $\det_2(U)^2 = \det(R)\det(C) = (-1)^{r} (-1)^c$, where $r$ and $c$ are the numbers of swaps performed by $R$ and $C$. It is easy to see that $\det(R) = (-1)^{(|G| - t)/2}$ and $\det(C) = (-1)^{\sum (n^2_{\rho} - n_{\rho})/2}$.
\end{proof}
\begin{remark}
    For symmetric group the quantity $\sum_{\rho} n_\rho = t$ is the number of involutions.
\end{remark}

\section{Matrix algebra tensor}\label{sec:mat}
Fix even $d$ and let us write $\mathsf{M}_X := \mathsf{M}^{(d)}_X$.
\begin{lemma}\label{lemma:trace}
    Let $X = (x_{ij})$ be a $n \times n$ matrix. The tensor $\mathsf{M}_X\in (\mathbb{C}^{n^2})^{\otimes d}$ satisfies $$\dett_d(\mathsf{M}_X) = \det(X)^n \dett_d(\mathsf{M}_n).$$
\end{lemma}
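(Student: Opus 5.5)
The plan is to realize $\mathsf{M}_X$ as the image of $\mathsf{M}_n$ under a linear map acting on the first mode only, and then apply Proposition~\ref{prop:inv}. The dual form makes this plausible: $\mathsf{M}_X^*(A_1,\ldots,A_d)=\mathrm{Tr}(X^tA_1\cdots A_d)=\mathsf{M}_n^*(X^tA_1,A_2,\ldots,A_d)$. This suggests that $\mathsf{M}_X$ is obtained from $\mathsf{M}_n$ by left multiplication by $X$ in the first tensor factor $\mathrm{End}(\mathbb{C}^n)\cong\mathbb{C}^{n^2}$.

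First I will check this in coordinates, which avoids worrying about transposes and dualities. By definition,
$$
(\mathsf{M}_n)_{(k,j_1),(i_2,j_2),\ldots,(i_d,j_d)}=\delta_{k=j_d}\,\delta_{j_1=i_2}\cdots\delta_{j_{d-1}=i_d}.
$$
Define the $n^2\times n^2$ matrix $L$ by $L_{(i,j),(k,l)}=X_{ik}\,\delta_{j=l}$, so that $L=X\otimes \mathrm{ID}_n$ under the identification $\mathbb{C}^{n^2}=\mathbb{C}^n\otimes\mathbb{C}^n$. Then
$$
\bigl((L,\mathrm{ID},\ldots,\mathrm{ID})\cdot\mathsf{M}_n\bigr)_{(i_1,j_1),\ldots,(i_d,j_d)}
=\sum_{k,l}X_{i_1k}\,\delta_{j_1=l}\,\delta_{k=j_d}\,\delta_{l=i_2}\prod_{s=2}^{d-1}\delta_{j_s=i_{s+1}}.
$$
Carrying out the sums over $l$ and $k$ leaves exactly $X_{i_1j_d}\prod_{s=1}^{d-1}\delta_{j_s=i_{s+1}}$, which is $(\mathsf{M}_X)_{(i_1,j_1),\ldots,(i_d,j_d)}$. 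Hence $\mathsf{M}_X=(L,\mathrm{ID},\ldots,\mathrm{ID})\cdot\mathsf{M}_n$.

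Next, $\det(L)=\det(X\otimes\mathrm{ID}_n)=\det(X)^n$, since $L$ is block diagonal with $n$ copies of $X$ after reordering the basis. For invertible $X$, Proposition~\ref{prop:inv} then gives $\det_d(\mathsf{M}_X)=\det(X)^n\det_d(\mathsf{M}_n)$.

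Both sides are polynomials in the entries $x_{ij}$, and $\mathrm{GL}_n$ is Zariski dense in $\mathrm{Mat}_{n\times n}$, so the identity extends to all $X$. Alternatively, one can note that $\det_d$ is multilinear in the slices of each mode, so the transformation rule in Proposition~\ref{prop:inv} holds for arbitrary (possibly singular) $g_i$ by the usual multilinearity and alternation argument.

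The only real obstacle is bookkeeping: I must act on the correct mode, with $X$ rather than $X^t$, and the index check above settles this. Everything else is an immediate consequence of Proposition~\ref{prop:inv}.
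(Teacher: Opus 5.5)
Your proposal is correct and is essentially the paper's proof: you write $\mathsf{M}_X$ as $\mathsf{M}_n$ acted on in the first mode by left multiplication by $X$, i.e.\ by $X\otimes I_n$, apply the relative invariance of Proposition~\ref{prop:inv}, and use $\det(X\otimes I_n)=\det(X)^n$. Your explicit index check settles the $X$ versus $X^t$ bookkeeping that the paper's dual-form phrasing leaves loose (harmless, since $\det X^t=\det X$). Your Zariski-density or multilinearity remark for singular $X$ covers a case the paper applies Proposition~\ref{prop:inv} to without comment.
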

\begin{proof}
It is not hard to see that $$\mathsf{M}^*_X(A_1,\ldots,A_d) = \mathsf{M}^*_{I_n}(XA_1, \ldots, A_d) = R_X \cdot \mathsf{M}^*_{I_n}(A_1, \ldots, A_d),$$ where $R_X \in \mathrm{End}(\mathbb{C}^{n^2})$ is a linear operator acting on the first mode of tensor $\mathsf{M}^*_{I_n}$ by $A_1 \to X \cdot A_1$. The hyperdeterminant is a relative invariant (Proposition~\ref{prop:inv}), so 
$$
    \dett_d(\mathsf{M}_X) 
    = \dett_d(R_X \cdot \mathsf{M}_{I_n}) 
    = \det(R_X) \cdot \dett_d(\mathsf{M}_{I_n}).
$$
The operator $R_X$ acts by matrix multiplication $X$ on the column subspace spanned by the vectors $\{E_{1,j},\ldots,E_{n,j}\}$ independently for each column $j$, thus  
$\det(R_X) = \det(X \otimes I_{n}) = \det(X)^n$. 
\end{proof}

It remains to compute $\det(\mathsf{M}_{n_{\rho}})$.

\begin{theorem}[cf. Theorem~\ref{th:mmt}]
    For even $d$ and any $n$:
    $$
        \dett_d(\mathsf{M}_{n}) =  \pm\left(
            \prod_{i=0}^{n-1} \frac{(n+i)!}{i!}
            \right)^{\frac{d}{2} - 1}.
    $$
\end{theorem}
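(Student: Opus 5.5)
The plan is to avoid the rook-placement expansion and instead write $\dett_d$ as a full contraction, then use the $\mathrm{GL}(n)\times\mathrm{GL}(n)$-structure of $\mathbb{C}^{n^2}=V\otimes V^*$ with $V=\mathbb{C}^n$. Put $N=n^2$ and let $\Lambda_N\in(\mathbb{C}^N)^{\otimes N}$ be the fully antisymmetric tensor, whose coordinates are $\sgn(\sigma)$ on permutations. Definition \eqref{eq:hdet} says
\[
\dett_d(X)=\tfrac{1}{N!}\,\langle X^{\otimes N},\Lambda_N^{\otimes d}\rangle ,
\]
where the $N$ copies of $X$ are indexed by the rooks $i\in[N]$, and the $k$-th factor $\Lambda_N$ contracts all $k$-th modes.

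First step: identify $\Lambda_N$ on $(V\otimes V^*)^{\otimes N}\cong V^{\otimes N}\otimes V^{*\otimes N}$. By the skew Cauchy formula,
\[
\Lambda^N(V\otimes V^*)=\bigoplus_{\lambda} S_\lambda V\otimes S_{\lambda'}V^* .
\]
Since $N=n^2$ and both factors have dimension $n$, only the square $\lambda=(n^n)=\lambda'$ survives, and $S_\lambda V\cong\det^n$ is one-dimensional. Concretely, I will write the antisymmetrizer of $S_N$ acting diagonally on $V^{\otimes N}\otimes V^{*\otimes N}$ in terms of Young symmetrizers. It should equal a scalar multiple of $c_\lambda\otimes \omega(c_\lambda)$ (or $c_\lambda\otimes c_{\lambda}^{t}$), where $\omega$ twists by the sign and swaps rows with columns. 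So $\Lambda_N$ is, up to an explicit scalar, the operator $c_\lambda$ on the $V$-legs tensored with its twisted counterpart on the $V^*$-legs, all for one fixed standard filling $T$ of the $n\times n$ square.

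Second step: contract against $\mathsf{M}_n^{\otimes N}$. Each copy of $\mathsf{M}_n$ is a cyclic chain of Kronecker deltas $j_k=i_{k+1}$, $j_d=i_1$. So the $V^*$-leg of mode $k$ is glued to the $V$-leg of mode $k+1$ in all $N$ copies simultaneously. The total contraction therefore becomes a trace over $V^{\otimes N}$ of a cyclic product of $d$ operators. These are, alternately, $c_\lambda$ and the transpose/twisted version coming from the $V^*$ side; using $\sgn$-twisting and $d$ even, the $d$ factors pair up into $d/2$ copies of the same idempotent-like element $e=c_\lambda c_\lambda^{*}$ (or simply $c_\lambda$ after conjugating by a fixed permutation). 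Then I expect
\[
\dett_d(\mathsf{M}_n)=\pm\tfrac{\kappa}{N!}\,\mathrm{tr}_{V^{\otimes N}}\bigl(c_\lambda^{\,d/2}\bigr)
\]
for an explicit normalization $\kappa$.

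Third step: use the classical identities $c_\lambda^2=h_\lambda c_\lambda$, where $h_\lambda=N!/f^\lambda$ is the hook product, and $\mathrm{tr}_{V^{\otimes N}}(c_\lambda)=h_\lambda\dim S_\lambda V=h_\lambda$. This gives $\mathrm{tr}(c_\lambda^{d/2})=h_\lambda^{d/2}$. The hook product of the $n\times n$ square is $\prod_{i,j}(i+j-1)=\prod_{i=0}^{n-1}\frac{(n+i)!}{i!}$. Dividing by the normalization should leave exactly $h_\lambda^{d/2-1}$. As a sanity check, the case $d=2$ must give $\pm1$, since there $\mathsf{M}_n$ is a permutation matrix; this pins down $\kappa=N!/h_\lambda$ up to sign.

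The main obstacle is the first two steps: getting the precise scalar relating $\Lambda_N$ restricted to the square component to $c_\lambda\otimes(\text{twisted }c_\lambda)$, and tracking how the row/column roles of $a_\lambda,b_\lambda$ swap between consecutive modes. Only then does the cyclic product collapse to a power of a single quasi-idempotent rather than a product like $c_T c_{T'}$ for different tableaux. I would first settle this by checking $n=2$ (so $N=4$) directly, and use the $d=2$ case to fix the normalization; signs are allowed to be ignored, as the statement is only up to $\pm$.
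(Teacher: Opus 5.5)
\textbf{There is a genuine gap: Step 1 is false as stated, and Step 2 is left as ``I expect''.} Your contraction step itself is sound. Gluing the cyclic deltas of $\mathsf{M}_n$ gives exactly $\dett_d(\mathsf{M}_n)=\frac{1}{N!}\mathrm{tr}_{V^{\otimes N}}(L^d)$. Here $L$ is $\Lambda_N$ read as an operator on $V^{\otimes N}$: $L_{w,u}=\sgn(\sigma)$ for the unique $\sigma$ whose first- and second-coordinate words are $w$ and $u$, and $L_{w,u}=0$ if there is none.

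\textbf{Why Step 1 fails.} The antisymmetrizer on $(V\otimes V^*)^{\otimes N}$ is the rank-one projector onto $\mathbb{C}\Lambda_N$. So your claim would force $\Lambda_N$ to lie in the image of $c_\lambda\otimes\omega(c_\lambda)$ for a single filling. That is impossible:
\begin{itemize}
\item $\Lambda_N$ is fixed by $(g\otimes g^{-T})^{\otimes N}$, so $L$ commutes with $g^{\otimes N}$ for $g\in\mathrm{GL}(V)$.
\item $L$ satisfies $P_\pi L P_\pi^{-1}=\sgn(\pi)L$, where $P_\pi$ permutes the $N$ tensor factors.
\item Hence, on the $\lambda=(n^n)$ component, $L$ is a nonzero $S_N$-map $S^\lambda\to S^\lambda\otimes\sgn\cong S^\lambda$, and so has rank $f^\lambda$.
\item Any vector of the form $(c_\lambda\otimes B)(x)$ has matrix rank at most $\mathrm{rank}(c_\lambda|_{V^{\otimes N}})=\dim S_\lambda V=1$.
\end{itemize}
Already for $n=2$, $L$ on the $(2,2)$-weight space is a $6\times 6$ matrix of rank $2$. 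So $\Lambda_N$ is a signed $S_N$-orbit sum over all fillings, and the cyclic product does not collapse to $c_\lambda^{d/2}$ the way you describe. Fixing $\kappa$ from $d=2$ is then circular. It presupposes that the $d$-dependence has the form $\kappa\,\mathrm{tr}(e^{d/2})$, with $e^2=h_\lambda e$ and $\kappa$ independent of $d$. That is the whole content of the theorem.

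\textbf{How to repair it within your framework.} The missing idea is Schur's lemma.
\begin{itemize}
\item The component $S^\lambda\otimes S_\lambda V$ of $V^{\otimes N}$ is irreducible under $S_N\times\mathrm{GL}(V)$.
\item $L$ is supported there, by your skew Cauchy remark.
\item $L^2$ commutes with both groups, so $L^2=c\cdot\mathrm{id}$ on that component.
\item Hence $\mathrm{tr}(L^d)=c^{d/2}f^\lambda$.
\item Now $d=2$ legitimately fixes $c$: there $\mathsf{M}_n$ is the permutation matrix of $(i,j)\mapsto(j,i)$, which gives $c=\pm N!/f^\lambda=\pm h_\lambda$.
\item Therefore $\dett_d(\mathsf{M}_n)=\pm h_\lambda^{d/2-1}$, with no Young symmetrizers at all.
\end{itemize}

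\textbf{The paper's route.} The paper spends the $1/N!$ on gauge-fixing $\sigma_1=\mathrm{id}$. It encodes the remaining $\sigma_k$ by elements $\pi_k$ of the column stabilizer $C$, with $\tau\pi_{2i}\tau\in R$, subject to $\pi_1\pi'_2\cdots\pi_{d-1}\pi'_d=\mathrm{id}$. It then recognizes the signed count as the identity coefficient of $c_\lambda^{d/2}$. That gauge-fixing is exactly what reduces the orbit sum to a single tableau.

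Your target formula with $\kappa=f^\lambda$ is in fact equivalent to the paper's. The identity coefficient equals $\frac{1}{N!}\mathrm{tr}_{\mathbb{C}S_N}$, and $\mathrm{tr}_{\mathbb{C}S_N}=f^\lambda\,\mathrm{tr}_{V^{\otimes N}}$ on the $\lambda$-block. What is missing is a valid bridge to that formula.
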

\begin{proof}
    To compute $\det_d(\mathsf{M}_{n})$ we expand:
    $$
        \dett_d(\mathsf{M}_{n}) = \sum_{\sigma_1=\mathrm{id},\sigma_2,\ldots,\sigma_d \in S_{[n]^2}} \sgn(\sigma_1\cdots\sigma_d) \prod_{(i,j) \in [n]^2} (\mathsf{M}_n)_{\sigma_1(i, j),\ldots,\sigma_d(i, j)},
    $$
    where the sum runs over permutations in $S_{[n]^2}$. For a given tuple $(\sigma_i)$, the corresponding monomial does not vanish if and only if
    \begin{align}\label{eq:conditions}
        \sigma_i(p)_2 = \sigma_{i+1}(p)_1
    \end{align}
    for any $p \in [n]^2$ and $i \in [d]$ ($\sigma_{d+1} = \sigma_1$), i.e. the second coordinate of $\sigma_i$ and the first coordinate of $\sigma_{i+1}$ are identical. 
    Under the standard lexicographical order, let us present each $\sigma$ as $2$-row array:
    $$
        \sigma := \begin{bmatrix}
            \sigma(1,1)_1 & \sigma(1,2)_1 & \ldots & \sigma(n,n)_1\\
            \sigma(1,1)_2 & \sigma(1,2)_2 & \ldots & \sigma(n,n)_{2}
        \end{bmatrix}.
    $$
    In particular, the identity permutation reads as
    $$
    \mathrm{id} = \begin{bmatrix}
            11\ldots 1 \ldots nn\ldots n\\
            12\ldots n \ldots 12\ldots n
        \end{bmatrix} =: \begin{bmatrix}
            \mathrm{row}\\
            \mathrm{col}
        \end{bmatrix},
    $$
    where the position $p$ is occupied with the vertical pair $(i,j)$ so that $(i-1)n + j = p$, and the words $\mathrm{row}$ and $\mathrm{col}$ represent the first and the second rows of $\mathrm{id}$.  
    By the conditions imposed in \eqref{eq:conditions} we may write:
    $$
    \sigma_1 = \begin{bmatrix}
             a_1  \\
             a_2  
        \end{bmatrix},  
    \qquad
    \sigma_2 = \begin{bmatrix}
             a_2  \\
             a_3  
        \end{bmatrix},
    \qquad
    \ldots,
    \qquad
    \sigma_d = \begin{bmatrix}
             a_{d}  \\
             a_1  
        \end{bmatrix}
    $$
    where $a_1 = \mathrm{row}$, $a_2 = \mathrm{col}$ and we iterate over the words $a_3,\ldots a_{d} := S_{n^2} \cdot (1^n \ldots n^n)$ --- anagrams of the word $\mathrm{row}$. Let $R$ and $C$ be the row and column stabilizers of the words $\mathrm{row}$ and $\mathrm{col}$ in $S_{n^2}$. 
    Let $\tau \in S_{[n]^2}$ be the transpose permutation, i.e. $\tau(i,j) = \tau(j,i)$. Then
    \begin{align}\label{eq:sigma-exp}
        \sigma_{i-1} = \pi_{i-1}\cdot \tau \cdot \sigma_{i}, \qquad \text{where}\quad \pi_{i-1}: \begin{bmatrix}
            a_{i+1}\\a_{i}
        \end{bmatrix} \to \begin{bmatrix}
            a_{i-1}\\a_{i}
        \end{bmatrix}.
    \end{align}
    Indeed, diagrammatically:
    $$
    \begin{bmatrix}
        \mathrm{row}\\\mathrm{col}
    \end{bmatrix}
    \xlongrightarrow{\sigma_{i}}
    \begin{bmatrix}
        a_{i}\\a_{i+1}
    \end{bmatrix}
    \xlongrightarrow{\tau}
    \begin{bmatrix}
        a_{i+1}\\a_{i}
    \end{bmatrix}
    \xlongrightarrow{\pi_{i-1}}
    \begin{bmatrix}
        a_{i-1}\\a_{i}
    \end{bmatrix}.
    $$
    Moreover, $\pi_{i-1} \in C$, since the set $\{(1,c),\ldots,(n,c)\}$ is permuted independently for each column $c$. Further, by repeated application of \eqref{eq:sigma-exp} we obtain:
    $$
        \sigma_{1} = \pi_1\tau\sigma_{2} = \pi_{1}\tau\pi_{2}\tau\sigma_{3} = \ldots = \pi_{1}\tau\pi_{2}\tau\cdots\pi_d\tau\sigma_{1},
    $$
    which yields
    \begin{align}\label{eq:pi-prod}
        \pi_{1} \cdot \tau \cdot \ldots \cdot \pi_d \cdot \tau 
        = \prod_{i=1}^{d/2} \pi_{2i-1} \cdot (\tau \pi_{2i} \tau)
        = \mathrm{id},
    \end{align}
    where each $\pi_i \in C$. 
    Note that from the tuple $(\pi_1,\ldots,\pi_d)$ satisfying \eqref{eq:pi-prod} we can uniquely recover the tuple $(\sigma_1,\ldots,\sigma_d)$ with $\sigma_1=\mathrm{id}$ satisfying \eqref{eq:sigma-exp}. Therefore, this transition is bijective. By grouping $\sigma_{2i-1}\sigma_{2i}$ and using \eqref{eq:sigma-exp}, we track the sign of the monomial:
    $$
        \prod_{j=1}^d\sgn(\sigma_{j}) 
        = \prod_{i=1}^{d/2} \sgn(\sigma_{2i-1})\sgn(\sigma_{2i})
        = \sgn(\tau)^{d/2} \prod_{i=1}^{d/2} \sgn(\pi_{2i}).
    $$

    For even $i=1,\ldots,d/2$ replace $\pi_{2i}$ with $\pi'_{2i} = \tau \pi_{2i} \tau \in R$ in \eqref{eq:pi-prod}, i.e., with its conjugation by the involution $\tau = \tau^{-1}$. This allows us to rewrite the identity as 
    $$
    \prod_{i=1}^{d/2} \pi_{2i-1}\pi'_{2i} = \mathrm{id}.
    $$
    By $\sgn(\tau) = (-1)^{\binom{n}{2}}
    $ and $\sgn(\pi'_i) = \sgn(\pi_i)$ we transform the initial sum to
    \begin{align}\label{eq:signed}
        \det(\mathsf{M}_{I_n}) 
        = (-1)^{\frac{d}{2}\binom{n}{2}}
        \cdot 
        \sum_{
            \pi_i \in C:\ 
            \pi_1 \pi'_{2} \cdots \pi_{d-1} \pi'_{d} = \mathrm{id}
        } \sgn(\pi_2 \pi_4 \cdots \pi_d).
    \end{align}
    We will show that this sum appears in a power of the Young symmetrizer, an element of $\mathbb{C}S_n$ used to construct irreducible representations of $S_{n^2}$ (see \cite{fulton}). Namely, consider the following elements of the group algebra $\mathbb{C}S_{n^2}$ for $\lambda = n \times n$:
    $$
        a_{\lambda} = \sum_{g \in C} g, \qquad b_{\lambda} = \sum_{g \in R} \sgn(g) g, \qquad c_{\lambda} = a_{\lambda} b_{\lambda}.
    $$
    It is known that $(c_{\lambda})^2 = h_{\lambda} c_{\lambda}$ and its coefficient at the identity is $c_{\lambda}(\mathrm{id}) = 1$, where $h_{\lambda} = \prod_{(i,j) \in \lambda}(i+j-1)$ is the product of hooks. Consider its power:
    $$
        (c_{\lambda})^{d/2} = \prod_{i=1}^{d/2}\left(\sum_{\pi_{2i-1} \in C} \pi_{2i-1}\right) \cdot \left(\sum_{\pi'_{2i} \in R} \sgn(g) \pi'_{2i}\right) = \sum_{\pi_{i} \in C}\sgn(\pi'_{2}\cdots \pi'_{d}).
    $$
    Hence, evaluation at the identity yields
    $$(c_{\lambda})^{d/2}(\mathrm{id}) = (-1)^{\frac{d}{2}\binom{n}{2}}\det(\mathsf{M}_{n}).
    $$ 
    On the other hand, $(c_{\lambda})^{d/2} = h_{\lambda}^{d/2-1} \cdot c_{\lambda}$, thus
    $$
        \det(\mathsf{M}_n) = \pm (h_{n \times n})^{d/2 - 1} \cdot c_{\lambda}(1) = \pm (h_{n \times n})^{d/2 - 1},
    $$
    where $h_{n \times n} = \prod_{i=0}^{n-1} \frac{(n+i)!}{i!}$ is the product of hooks in the square partition.
\end{proof}
\begin{remark}
    More general signed sums such as \eqref{eq:signed} appear frequently in the evaluation of other hyperdeterminants at distinct tensors, see \cite{ay2,  bi, lzx}.
\end{remark}
We now complete the proof of the main theorem.
\begin{proof}[Proof of Theorem~\ref{th:main}]
By Theorem~\ref{th:block}, the Cayley hypertable $C^G$ is unitarily equivalent to a direct sum of matrix multiplication tensors
$
    (U,\ldots, U) \cdot C^G(x) = \bigoplus \alpha_\rho^{d/2-1} \mathsf{M}_{\rho(x)}.
$
The $\mathrm{SL}$-invariance and direct-sum properties of the hyperdeterminant (Proposition~\ref{prop:inv}, Proposition~\ref{prop:direct}) and concrete computation of sign factor (Lemma~\ref{lemma:sign}), imply that
$$
    \dett_d(C^G(x)) 
    = (-1)^{d s/2} \prod_{\rho \in G} \dett_d(\alpha^{d/2-1}_{\rho} \mathsf{M}^{(d)}_{\rho(x)})
    = (-1)^{d s/2} \prod_{\rho \in G} (\alpha^{d/2-1}_{\rho})^{n^2_\rho} \dett_d(\mathsf{M}^{(d)}_{\rho(x)}),
$$
where $s = \sum_{\rho} \binom{n_\rho}{2} + (|G| - t)/2$. 
Recall, that $\alpha_{\rho} = |G| / n_{\rho}$.
By Lemma~\ref{lemma:trace}, we compute each factor $\dett_d(\mathsf{M}^{(d)}_{\rho(x)}) = \det(\rho(x))^{n_\rho} \dett_d(\mathsf{M}^{(d)}_{n_\rho})
$; and by Theorem~\ref{th:mmt}, where each factor yields the sign $(-1)^{\frac{d}{2}\binom{n_\rho}{2}}$, we conclude
$$
    \dett_d(C^G(x)) 
        = (-1)^{\frac{d}{2}(|G|- t)/2} 
        \left(|G|^{|G|}
    \prod_{\rho} 
        \frac{
            h_{n_\rho\times n_\rho}
            }{n_\rho^{n_{\rho}}}
            \right)^{\frac{d}{2}-1}
            \prod_{\rho \in \hat{G}} \det(\rho(x))^{n_
    \rho},
$$
where $\rho(x) = \sum_g x_g \rho(g)$. The constant term  $H_G$ is clearly non-vanishing.
\end{proof}

\section{Algebra tensor}\label{sec:algebra}
Let $A$ be an associative (not necessarily semisimple) algebra with a given basis $\{a_1,\ldots,a_n\}$, let $\{x_1,\ldots,x_n\}$ be the set of variables associated with this basis, and let $\mathrm{Aut}(A)$ be the algebra automorphism group. Recall that the tensor $C^{A,a}(x) = \ell_x(a_{i_1}\cdots a_{i_d})$, where $\ell_x = \sum_{i=1}^n x_i a^*_i$.

\subsection{Invariance}
Let us show that the vanishing property of $C^{A,a}(x)$ does not depend on the basis. Let $a'_1,\ldots,a'_n$ be a different basis, with variables $x'_1,\ldots,x'_n$ associated with it.

\begin{lemma}\label{lemma:basis}
    As polynomials, we have $\det(C^{A,a}(x)) \neq 0 \iff \det(C^{A',a'}(x')) \neq 0$. 
\end{lemma}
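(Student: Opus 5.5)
The plan is to show that changing the basis of $A$ acts on the hypermatrix $C^{A,a}(x)$ by an invertible linear map in every mode, combined with an invertible linear substitution of the variables. Proposition~\ref{prop:inv} then gives that the two hyperdeterminants differ by a nonzero constant after this change of variables.

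Let $P\in\mathrm{GL}(n,\mathbb{C})$ be the change of basis, $a'_j=\sum_i P_{ij}a_i$. For any $x'$, the linear form $\ell'_{x'}=\sum_k x'_k (a'_k)^*$ is again a linear functional on $A$. Since $\{a^*_k\}$ is a basis of $A^*$, we can write $\ell'_{x'}=\ell_x$ with $x=Qx'$ for some $Q\in\mathrm{GL}(n,\mathbb{C})$; explicitly $Q=(P^{-1})^t$. By multilinearity of the product in $A$,
\begin{align*}
C^{A,a'}_{j_1,\ldots,j_d}(x') = \ell_{Qx'}(a'_{j_1}\cdots a'_{j_d}) = \sum_{i_1,\ldots,i_d} P_{i_1j_1}\cdots P_{i_dj_d}\, C^{A,a}_{i_1,\ldots,i_d}(Qx').
\end{align*}
In other words, $C^{A',a'}(x') = (P^t,\ldots,P^t)\cdot C^{A,a}(Qx')$.

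Applying Proposition~\ref{prop:inv}, which holds over the polynomial ring because it is a polynomial identity in the entries, we obtain
\begin{align*}
\dett_d(C^{A',a'}(x')) = \det(P)^d\, \dett_d(C^{A,a})(Qx').
\end{align*}
The map $f(x)\mapsto f(Qx')$ is a ring automorphism of $\mathbb{C}[x]$ because $Q$ is invertible, and $\det(P)^d\neq 0$. Hence one polynomial vanishes identically if and only if the other does.

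The only delicate point is bookkeeping: we must check that the variable substitution is linear and invertible, i.e.\ that $x\mapsto \ell_x$ is an isomorphism $\mathbb{C}^n\to A^*$ in both bases. Getting the transposes of $P$ and $Q$ exactly right is irrelevant to the conclusion, since only their invertibility is used.
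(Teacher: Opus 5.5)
Your proposal is correct and matches the paper's proof: the change of basis acts by one invertible matrix in every mode together with an invertible linear substitution of the variables, and Proposition~\ref{prop:inv} then gives $\dett_d(C^{A',a'}(x')) = \det(P)^d\,\dett_d(C^{A,a})(Qx')$ with $\det(P)^d\neq 0$. Your bookkeeping ($Q=(P^{-1})^t$, action by $P^t$) is cleaner than the paper's, which writes the substitution loosely as $x=g^{-1}x'$, but as you note only invertibility matters.
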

\begin{proof}
    Let $g \in \mathrm{GL}(n,\mathbb{C})$ be the change of basis matrix, i.e. $a'_i = g \cdot a_i$. Then for the dual bases we have $(a')^*_i = a^* g^{-1}$. Therefore, the associated linear forms $\ell_x$ and $\ell'_x$ are related as follows:
    $
        \ell'_{x'} 
        := \sum_{k=1}^n (a')^*_k x'_k
        = \sum_{k=1}^n (a^* g^{-1})_k x'_k = 
        \sum_{i=1}^n a^*_i (g^{-1} x')_i = \ell_{g^{-1}x'}
    $, thus $x = g^{-1}x'$. Moreover,
    $$
        C^{A,a'}(x')_{i_1,\ldots,i_d} = \ell'_{x'}(a'_{i_1}\cdots a'_{i_d}) = \ell_{g^{-1} x}(g a_{i_1} \cdots g a_{i_d}) = (g^{\otimes d} \cdot C^{A,a}(g^{-1} x'))_{i_1,\ldots,i_d}.
    $$
    Thus $\dett_d(C^{A,a'}(x')) = \dett_2(g)^d \dett_d(C^{A,a}(g^{-1}x))$, where $\det_2(g) \neq 0$.
\end{proof}
This justifies the notation $C^{A}(x)$ without specifying the basis.

\subsection{Semisimple algebras}
Let $A$ be semisimple.
By Wedderburn theorem, there is an algebra isomorphism $U$ ($=\Phi$ from the introduction) which maps $A$ to the direct sum of central simple algebras
$$
    U: A \to B:=\bigoplus_{\rho \in \widehat{A}} B_\rho, \qquad B_\rho = \mathrm{Mat}({n_\rho}, \mathbb{C}).
$$
The algebra $B$ has the natural matrix units basis $\{E^{\rho}_{i,j}\}$ indexed by triplets $(\rho,i,j)$, where $\rho \in \widehat{A}$ and $i,j \in [n_\rho]$. 
Such $U$ is not unique. 
It maps the $d$-fold iterated multiplication tensor $\mu_A: A^{\otimes d} \otimes A^*$ to the multiplication tensor of $B$ by coordinatewise action:
$$
    U: \mu_A \mapsto \mu_B, 
    \qquad (U^{\times d}, U^{-T}) \cdot \mu_A = \mu_B.
$$
\begin{theorem}
    We have,
    $$
        \dett_d(C^A(x)) 
        = H_{A} \cdot \prod_{\rho \in \widehat{A}} \left(\ \dett_2(y_{ij}^{\rho})\ \right)^{n_\rho},
    $$
    where $y = Ux$.
\end{theorem}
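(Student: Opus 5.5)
The plan is to mirror the proof of Theorem~\ref{th:main}, with the Wedderburn isomorphism $U$ playing the role of the Fourier transform. First we transport $C^A(x)$ along $U$ using the computation in the proof of Lemma~\ref{lemma:basis}. Set $b_k := U a_k \in B$ and write the coordinates of $U$ in the matrix-unit basis $\{E^\rho_{ij}\}$. Since $U$ is an algebra isomorphism, $U(a_{i_1}\cdots a_{i_d}) = b_{i_1}\cdots b_{i_d}$. Hence
$$
C^A(x)_{i_1,\ldots,i_d} = \ell_x(U^{-1}(b_{i_1}\cdots b_{i_d})) = \ell'_y(b_{i_1}\cdots b_{i_d}),
$$
where $\ell'_y := \ell_x\circ U^{-1}$ is a linear form on $B$. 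Its coordinates $y$ in the dual matrix-unit basis are obtained from $x$ by the inverse transpose of $U$; this is what the statement abbreviates as $y = Ux$, and I would fix the convention at this point. Expanding each $b_{i_k}$ in matrix units gives $C^A(x) = (U^{-1},\ldots,U^{-1})\cdot C^{B}(y)$, up to the choice of whether $U$ or its transpose acts. Here $C^B(y)$ is the algebra tensor of $B$ in the basis $\{E^\rho_{ij}\}$. By Proposition~\ref{prop:inv},
$$
\dett_d(C^A(x)) = \det(U)^{\pm d}\,\dett_d(C^B(y)),
$$
and since $d$ is even, the sign of the exponent only affects which power of $\det\Phi$ appears in $H_A$.

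Next we compute $C^B(y)$ directly. A product $E^{\rho_1}_{i_1j_1}\cdots E^{\rho_d}_{i_dj_d}$ vanishes unless all $\rho_k$ are equal and $j_k = i_{k+1}$ for every $k$, in which case it equals $E^\rho_{i_1 j_d}$. So $C^B(y)$ is block diagonal, with blocks indexed by $\rho$, and the $\rho$-block is precisely $\mathsf{M}^{(d)}_{Y_\rho}$ with $Y_\rho = (y^\rho_{ij})$. This is the same conclusion as Theorem~\ref{th:block}, but with all $\gamma_\rho = 1$ because no normalization of $U$ is involved. Proposition~\ref{prop:direct}, Lemma~\ref{lemma:trace} and Theorem~\ref{th:mmt} then give
$$
\dett_d(C^B(y)) = \prod_\rho \det(Y_\rho)^{n_\rho}\,\dett_d(\mathsf{M}_{n_\rho}) = \pm\prod_\rho h_{n_\rho\times n_\rho}^{d/2-1}\det(Y_\rho)^{n_\rho}.
$$

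The main obstacle is reconciling the constant. The derivation above yields $\det(\Phi)^{\pm d}\prod_\rho h_{n_\rho\times n_\rho}^{d/2-1}$, while the stated $H_A$ also contains the factor $(n^n\prod_\rho n_\rho^{-n_\rho})^{d/2-1}$. That factor comes from the group normalization $\alpha_\rho = |G|/n_\rho$ and is specific to the Fourier transform. I would handle this by checking the normalization convention implicit in ``$y=Ux$''. If $U$ is taken to be the unitary-normalized map, as in Theorem~\ref{th:block}, the scalings $\alpha_\rho^{1/2}$ reappear and produce exactly $\prod_\rho \alpha_\rho^{(d/2-1)n_\rho^2} = (n^n\prod_\rho n_\rho^{-n_\rho})^{d/2-1}$. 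Otherwise that factor should be absorbed into $\det(\Phi)^d$, consistent with the paper's remark that $H_A$ is determined only up to a basis-dependent factor.

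The final claim, that each factor is independent of the choice of matrix units, follows from Skolem--Noether. Changing the matrix units within $B_\rho$ conjugates $Y_\rho$ by some invertible $P$, and $\det(PY_\rho P^{-1}) = \det(Y_\rho)$.
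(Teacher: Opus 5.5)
Your core argument is the paper's own proof. You transport $C^A(x)$ along the Wedderburn map to $C^B(y)$ and observe that its $\rho$-blocks are exactly $\mathsf{M}^{(d)}_{Y_\rho}$, with no scalars $\gamma_\rho$ since $\Phi$ is multiplicative. You then apply Proposition~\ref{prop:direct}, Lemma~\ref{lemma:trace} and Theorem~\ref{th:mmt}, and use Skolem--Noether for independence of the matrix units.

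One point to tighten: the exponent of $\det\Phi$ is determined, and the parity of $d$ plays no role. Let $U$ be the coordinate matrix of $\Phi$, i.e.\ $Ua_i=\sum_m U_{mi}E_m$. Your own expansion then gives $C^A(x)_{i_1,\ldots,i_d}=\sum_{m}\prod_\ell U_{m_\ell i_\ell}\,C^B(y)_{m_1,\ldots,m_d}$, that is, $C^A(x)=(U^T,\ldots,U^T)\cdot C^B(y)$ with $y=U^{-T}x$, not $(U^{-1},\ldots,U^{-1})$. So the factor is $\det(\Phi)^{+d}$.

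The part that does not work is your reconciliation of the constant, and you should drop it. Your computation yields $\pm\det(\Phi)^d\prod_\rho h_{n_\rho\times n_\rho}^{d/2-1}$, and this is exactly what the paper's proof yields too. That proof ends with $\det(U)^d\prod_\rho\dett_d(C^{B_\rho}(y^\rho))$, where $\dett_d(C^{B_\rho}(y^\rho))=\dett_2(y^\rho)^{n_\rho}h_{n_\rho\times n_\rho}^{d/2-1}$ up to sign. It never produces $(n^n\prod_\rho n_\rho^{-n_\rho})^{d/2-1}$.

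That extra factor in the introduction's $H_A$ cannot hold in general. Take $A=\mathrm{Mat}_m$ with $m\ge 2$, its matrix-unit basis, and $\Phi=\mathrm{id}$. Then $C^A(x)=\mathsf{M}^{(d)}_X$, and Lemma~\ref{lemma:trace} with Theorem~\ref{th:mmt} give $\pm\det(X)^m h_{m\times m}^{d/2-1}$. There is no extra $(m^{2m^2-m})^{d/2-1}$ when $d\ge 4$.

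Your proposed fixes are also invalid on their own terms:
\begin{itemize}
\item A general semisimple $A$ has no $\alpha_\rho$ and no ``unitary normalization''.
\item Rescaling $U$ blockwise breaks multiplicativity. Then $C^B(y)$ acquires scalars $\gamma_\rho$, and $y$ stops being the coordinates of $\ell_x\circ\Phi^{-1}$.
\item $\det(\Phi)$ is fixed by $\Phi$ and the two bases, so nothing can be absorbed into it.
\item Even for groups your identity is off: $\prod_\rho\alpha_\rho^{n_\rho^2}=n^n/\prod_\rho n_\rho^{n_\rho^2}$, not $n^n/\prod_\rho n_\rho^{n_\rho}$.
\item For the genuine Wedderburn map $g\mapsto\bigoplus_\rho\rho(g)$, these $\alpha_\rho$ are already carried by $\det(\Phi)^d$ and by the normalization of $y$. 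Inserting them again would double count.
\end{itemize}
State the theorem with the constant you actually derived.
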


\begin{proof}
Recall, $C^A(x) = \sum_{k=1}^n (\mu_A)^k x_k$. Then after the transformation $U$ in each of $d$ modes
\begin{align*}
    (U^{\times d} \cdot C^A(x))_{i_1,\ldots,i_d} = \sum_{j_1,\ldots,j_d} \prod_{\ell=1}^d U_{i_\ell j_\ell} \sum_{k=1}^n (\mu_A)^{k}_{j_1,\ldots,j_d} x_k = \left(\sum_{k=1}^n x_k ((U^{\times d}, \mathrm{I}) \cdot \mu_A)^k\right)_{i_1,\ldots,i_d},
\end{align*}
where $(U^{\times d}, \mathrm{I})$ acts on $\mu_A \in A^{\otimes d} \otimes A^*$ by leaving the output coordinate unchanged. Completing the action to $(U^{\times d}, U^{-1})$ by replacing the $x$-coordinates with the new coordinates $y = (y^{\rho}_{ij})_{\rho,i,j} = Ux$, we obtain
\begin{align*}
    U^{\times d} \cdot C^A(x) 
    = \sum_{k=1}^n x_k ((\mathrm{I}^{\times d}, U^{T}) \cdot (U^{\times d}, U^{-T}) \cdot \mu_A)^k
    = \sum_{i=1}^n (\sum_{k=1}^n x_k U^{T}_{ki}) (\mu_B)^i = C^B(y).
\end{align*}
Then
$
    C^B(y) = \bigoplus_{\rho \in \widehat{A}} C^{B_\rho}(y^\rho_{ij}) 
$
and in the basis form:
$$
    C^{B_\rho}(y) = \sum_{i_1,i_2,\ldots,i_d,j_d} E^{\rho}_{i_1i_2} \otimes E^{\rho}_{i_2 i_3} \ldots \otimes E^\rho_{i_{d} j_{d}} y^{\rho}_{i_1,j_{d} } \in (\mathbb{C}^{n_\rho^2})^{\otimes d}.
$$
Hence, 
$$
    \dett_d(C^A(x)) = \det(U)^{d} \cdot \dett_d(C^B(y)) = \det(U)^d \cdot \prod_{\rho \in \widehat{A}} \dett_d(C^{B_\rho}(y^{\rho})).
$$
Then by Lemma~\ref{lemma:trace} and Theorem~\ref{th:mmt} we have $\dett_d(C^{\mathrm{Mat}_{n_\rho}}(y^{\rho})) = \dett_2(y^{\rho})^{n_\rho} (h_{n_\rho \times n_\rho})^{d/2-1}$. 

Moreover, by Skolem-Noether theorem, if we choose another basis of the matrix units $\{E'_{ij}\}$ in some fixed $\rho$-block, then there exists $S \in \mathrm{GL}(V_\rho)$, s.t. $E'_{ij} = S E_{ij} S^{-1} = \sum_{a,b} S_{ai} E_{ab} S^{-1}_{jb}$, where $E_{ij}:=E^\rho_{ij}$. 
Then, $y'_{ij} = \ell_y(E'_{ij})=\sum_{a,b} S_{ai} y_{ab} S^{-1}_{jb} = S y_{ij} S^{-1}$, therefore determinant changes as follows $\dett_2(y'_{ij}) = \dett_2(S^T (y_{ij}) S^{-T}) = \dett_2(y_{ij})$.
\end{proof}

\subsection{Nonsemisimple algebras}
The Jacobson radical $J(A)$ of an associative algebra $A$ is the largest two-sided nilpotent ideal. The algebra is semisimple if and only if $J(A) = 0$.
We will show that the algebra hyperdeterminant vanishes identically when $J(A) \neq 0$ using methods from geometric invariant theory. The proof is similar to that in \cite{lys} for multiplication tensor $\mu_A$.

\begin{theorem}
    Let $d > 2$ be even. Then, the Jacobson radical is nontrivial $J(A) \neq 0$ if and only if as a polynomial $\dett_d(C^A(x)) \equiv 0$.
\end{theorem}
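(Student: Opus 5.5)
The two directions are handled separately. If $J(A)=0$, the algebra is semisimple and the previous theorem applies: $\dett_d(C^A(x)) = H_A\prod_\rho \dett_2(y^\rho)^{n_\rho}$ with $y = Ux$ and $U$ invertible. Here $H_A\neq 0$, since $\det(U)\neq 0$ and the hook products are positive. Each factor $\dett_2(y^\rho)$ is a nonzero polynomial in $x$, because $y$ ranges over all of $\bigoplus_\rho \mathrm{Mat}(n_\rho)$. So the hyperdeterminant is not identically zero.

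For the converse, assume $J:=J(A)\neq 0$. I would show that for every fixed $x$ the $\mathrm{SL}(A)^{\times d}$-orbit closure of $C^A(x)$ contains $0$. Since $\dett_d$ is a nonconstant homogeneous $\mathrm{SL}^{\times d}$-invariant (Proposition~\ref{prop:inv}), it then vanishes at every $x$.

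\emph{Step 1: a graded basis.} Let $N$ be the nilpotency index, with $J^N=0\neq J^{N-1}$. Choose a vector-space splitting $A = W_0\oplus W_1\oplus\cdots\oplus W_{N-1}$, where $W_0$ is a complement of $J$ and $W_k$ is a complement of $J^{k+1}$ in $J^k$. By Lemma~\ref{lemma:basis} the basis choice is harmless, so I work in a basis adapted to this filtration. Set $w(v)=k$ for $v\in W_k$. Since $J^aJ^b\subseteq J^{a+b}$, a product of basis vectors of weights $k_1,\dots,k_d$ lies in $J^{k_1+\cdots+k_d}$.

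\emph{Step 2: a one-parameter subgroup.} Let $\lambda(t)$ act on every mode by $t^{w(v)-c}$ on $W_{w(v)}$. The shift $c=\sum_k k\dim W_k/n$ makes $\lambda(t)\in \mathrm{SL}(A)$; in the final count $c$ is replaced by a rational multiple, which is harmless for computing limits. The coordinate $(i_1,\dots,i_d)$ of $C^A(x)$ is scaled by $t^{\sum_l w(a_{i_l}) - dc}$, and it can be nonzero only when $\ell_x$ is nonzero on $J^{\sum w}$.

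\emph{Step 3: the obstacle.} With this naive choice the weight $\sum_l w - dc$ can be negative, for instance on coordinates where every index lies in $W_0$. So it does not obviously tend to $0$. The expected main obstacle is to design a single cocharacter, possibly different on different modes, under which every nonzero coordinate acquires strictly positive weight. This is the Hilbert–Mumford criterion for instability.

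My first attempt is to use different weights on different modes, as in the multiplication tensor proof of \cite{lys}. On mode $1$, give $W_0$ weight $-\alpha$ and $J$ weight $\beta$. On the other modes, make the weights of $W_0$ very negative and of $J$ positive, balanced so that each mode still has determinant $1$. Then exploit the following fact: a nonzero coordinate $\ell_x(a_{i_1}\cdots a_{i_d})$ must have few radical factors, since products of $N$ radical elements vanish, but the weight on $W_0$ is compensated only through the determinant condition. If this balancing fails, I would switch to the dual picture. Apply Hilbert–Mumford to $C^A(x)$ and look for a coordinate subspace that the support must avoid. The decisive combinatorial fact is that $C^A(x)$ is supported on index tuples $(i_1,\dots,i_d)$ with at most $N-1$ indices in $J$. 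A rook placement for $\dett_d$, with $\sigma_1=\mathrm{id}$, must use every basis element of $J$ in every mode. Counting shows that $\dim J$ rows in each of the $d$ modes, $d\dim J$ radical indices in total, must be covered by only $n$ rooks. I would aim to prove this is impossible, using $d>2$ and the fact that products involving radical elements lie in $J$. That would give $\dett_d\equiv 0$ directly, or an equivalent weight certificate for the torus.
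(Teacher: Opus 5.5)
\emph{Verdict: genuine gap.} Your semisimple direction is fine and matches the paper's, which simply invokes the factorization theorem. The converse is not proved. In Step~3 you correctly diagnose that a cocharacter with the same weights on every mode cannot, in general, make all support coordinates positive. Neither of your fallbacks repairs this.

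\emph{Your first fallback.} The signs are backwards. The support of $C^A(x)$ always contains cells all of whose indices lie in $W_0$: writing $\bar 1=\sum c_i\bar b_i$ in $A/J$ and expanding $\bar 1^{\,d}$ shows that some product of $d$ basis vectors of $W_0$ is not in $J$. Making $W_0$ negative on every mode therefore gives these cells negative weight. For $\mathbb{C}[\epsilon]/(\epsilon^2)$ the destabilizing weights are the opposite: $+1$ on $1$ and $-1$ on $\epsilon$ in each mode. This gives weights $d$ and $d-2$ on the support, positive precisely because $d>2$.

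\emph{Your rook count.} It is not contradictory either. ``At most $N-1$ radical indices per rook'' only gives $d\dim J\le n(N-1)$, which holds for $\mathbb{C}[x]/(x^5)$ with $d=4$ ($16\le 20$). A filtration-weighted version would use that each rook has $\sum_\ell\nu(i_\ell)\le N-1$ while a full placement has total $dD$. That disposes of $\mathbb{C}[x]/(x^k)$, but it is inconclusive whenever $dD\le n(N-1)$.
\begin{itemize}
\item For $\mathbb{C}^3\times\mathbb{C}[\epsilon]/(\epsilon^2)$ one has $4\le 5$, so you must first split off the semisimple factors with Proposition~\ref{prop:direct}.
\item Take the indecomposable local commutative algebra with basis $1,x,\dots,x^k,y_1,\dots,y_m$ and relations $xy_i=0$, $y_iy_j=\delta_{ij}x^k$, $x^{k+1}=0$, with $k=5$, $m\ge 30$, $d=4$. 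Here the weighted count fails and the contradiction comes from elsewhere. A nonzero cell containing some $y_i$ has at least $d-2$ entries equal to $1$. So at least $dm/2$ rooks need $d-2$ copies of $1$ each, while a placement contains only $d$ copies.
\end{itemize}
The missing idea is a weight or counting scheme that also sees the scarcity of semisimple basis vectors and the block structure, not only radical degrees.

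\emph{The paper's proof.} You will not find that idea there. The paper uses the single cocharacter $w_i=n\nu(i)-D$ on all modes and asserts the coordinate exponent $w_k-\sum_\ell w_{i_\ell}$. But $g(t)^{\otimes d}$ only multiplies the $(i_1,\dots,i_d)$ coordinate by $t^{\pm\sum_\ell w_{i_\ell}}$. The factor $t^{w_k}$ comes from rescaling the variables $x_k\mapsto t^{w_k}x_k$, i.e.\ from acting on the output mode. That is allowed for $\mu_A$ (as in \cite{lys}), where all $d+1$ modes are acted on, but not for $C^A(x)$ at fixed $x$.
\begin{itemize}
\item Hence $C^A(x;t)$ lies in the $\mathrm{SL}^{\times d}$-orbit of $C^A(x')$ for the moving point $x'_k=t^{w_k}x_k$, not in the orbit of $C^A(x)$.
\item The exponent genuinely produced by $\mathrm{SL}^{\times d}$ is $\pm\bigl(dD-n\sum_\ell\nu(i_\ell)\bigr)$, which is exactly your Step~3 obstacle.
\item The paper's argument never uses $d>2$. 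For $d=2$ it would give $\det_2(C^A(x))\equiv 0$ for $A=\mathbb{C}[\epsilon]/(\epsilon^2)$. In fact $C^A(x)=\begin{pmatrix} x_1 & x_\epsilon\\ x_\epsilon & 0\end{pmatrix}$ has determinant $-x_\epsilon^2\neq 0$, as it must since this algebra is Frobenius.
\end{itemize}
So your skepticism about a uniform cocharacter is justified. Completing the non-semisimple direction requires a genuinely new combinatorial or Hilbert--Mumford argument, not the paper's uniform cocharacter.
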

\begin{proof}
    If Jacobson radical is trivial, then the algebra is semisimple, thus by Theorem~\ref{th:alg}, polynomial $\det_d(C^A(x))$ does not vanish. Now, assume $J(A) \neq 0$. 
    By Lemma~\ref{lemma:basis}, to show the vanishing it is enough to display it on a specific basis. 
    We will choose a basis that respects the so-called radical filtration.
    
    Let $J:=J(A)$ with $J^{r} = 0$ for minimal $r \ge 2$. Consider the filtration 
    $$
        A = J^0 \supset J^1 \supset J^2 \supset \cdots \supset J^r = 0,
    $$ 
    where $J^k = J \cdots J$ ($k$ times).
    By the Wedderburn--Malcev theorem, the quotient $S_0 = A / J(A)$ is a semisimple subalgebra. 
    Let $S_k$ be any complement of $J^{k+1}$ in $J^k$, i.e. $J^k = S_k \oplus J^{k+1}$.
    Set $j_k = \dim J^k$ and $s_k = \dim S_k$. 
    Let $\{a_1, \ldots, a_n\}$ be a basis respecting this filtration, i.e. $S_k = \mathrm{span}\{a_i \mid j_{k} \ge i > j_{k+1}\}$. For each $i$, let $a_i \in S_{\nu(i)}$, where $\nu(i) \ge 0$ indicates the component to which it belongs, namely its filtration degree.

    Consider the tensor $C^A(x) = \sum_k \mu^k x_k$ with respect to the chosen basis, where $\{\mu^k_{i_1,\ldots,i_d}\}$ are the coordinates of the multiplication tensor. Note that
    \begin{align}\label{eq:vanish}
        \text{if }\quad \mu^k_{i_1,\ldots,i_d} \neq 0, \qquad\text{then}\quad \nu(k) \ge \nu(i_1) + \ldots + \nu(i_d).
    \end{align}
    Indeed, by definition, multiplication satisfies $J^k \cdot J^{\ell} \subseteq J^{k+\ell}$; therefore, any nonzero coefficient in the product $a_{i_1} \cdots a_{i_d}$ has the indicated lower bound on its filtration degree.
    Let $D := \sum_{i=1}^n \nu(i) = \sum_{k=0}^{r-1} k s_k$ be the sum of the degrees. Since $J \neq 0$, we have $D > 0$. Consider the transformation $g(t) \in \mathrm{SL}(n, \mathbb{C})$ defined by rescaling each basis vector:
    $$
        g(t): a_i \mapsto t^{w_i} a_i,\quad \text{where }w_i = n\cdot \nu(i) - D.
    $$ 
    Indeed, $\dett(g(t)) = \prod_{i=1}^n t^{\sum (n \cdot \nu(i) - D)} = t^{nD - nD} = 1$.
    Define $C^A(x;t) := g(t)^{\otimes d} \cdot C^A(x)$. Then, by analogous computations to those in Lemma~\ref{lemma:basis},
    $$
        C^A(x;t)_{i_1,\ldots,i_d} = \sum_k \mu^k_{i_1,\ldots,i_d} x_k \cdot t^{w_k - w_{i_1} - \ldots - w_{i_d}},
    $$
    where the exponent of $t$ is $w_k - \sum_{\ell=1}^d w_{i_\ell} = n \cdot (\nu(k) - \sum_{\ell = 1}^d \nu(i_\ell)) + (d-1)D > 0$ by \eqref{eq:vanish} and $D > 0$. Then the limit $t \to 0$ exists and yields $\lim_{t\to 0} C^A(x,t) = 0$. This implies that the closure $ \overline{\mathrm{SL}(n,\mathbb{C})^{\times d}\ C^A(x)}$, contains $0$, and hence $C^A(x)$ vanishes on continious function $\dett_d$.
\end{proof}

\begin{remark}
    In fact this show that the tensor $C^A(x)$ is unstable -- vanishes on any hyperdeterminant, i.e. it lies in the null cone, the zero set of all hyperdeterminants. The argument above is similar to the proof of null-cone membership for the multiplication tensor $\mu_A$ in \cite{lys}.
\end{remark}

\section*{Acknowledgements}
The author is grateful to Maciej Do\l{}ęga, Damir Yeliussizov, Joachim Jelisiejew, Askar Dzhumadildayev
 and Pietro Campochiaro for useful conversations and comments. This research was funded in part by {\it Narodowe Centrum Nauki}, grant 2021/42/E/ST1/00162.


\begin{thebibliography}{99}
\bibitem{ay}
Amanov, A., Yeliussizov, D. (2023).
Tensor slice rank and Cayley's first hyperdeterminant.
\textit{Linear Algebra and its Applications}, \textbf{656}, 224--246.

\bibitem{ay2}
Amanov, A., Yeliussizov, D. (2023).
Fundamental invariants of tensors, Latin hypercubes, and rectangular Kronecker coefficients.
\textit{International Mathematics Research Notices}, \textbf{2023}(20), 17552--17599.

\bibitem{ay3}
Amanov, A., Yeliussizov, D. (2025).
Some unimodal sequences of Kronecker coefficients.
\textit{Mathematische Zeitschrift}, \textbf{309}(1), 8.

\bibitem{ay4}
Amanov, A., Yeliussizov, D. (2025).
Highest weight vectors of tensors.
\textit{arXiv preprint} arXiv:2504.15413.

\bibitem{barvinok}
Barvinok, A. I. (1995).
New algorithms for linear $k$-matroid intersection and matroid $k$-parity problems.
\textit{Mathematical Programming}, \textbf{69}(1), 449--470.

\bibitem{blas}
Bl{\"a}ser, M. (2013).
Fast matrix multiplication.
\textit{Theory of Computing}, \textbf{5}, 1--60.
(Graduate Surveys)

\bibitem{lys}
Bl{\"a}ser, M., Lysikov, V. (2020).
Slice rank of block tensors and irreversibility of structure tensors of algebras.
In \textit{45th International Symposium on Mathematical Foundations of Computer Science (MFCS 2020)} (pp.~17:1--17:15).
Schloss Dagstuhl--Leibniz-Zentrum f{\"u}r Informatik.

\bibitem{bi}
B{\"u}rgisser, P., Ikenmeyer, C. (2017).
Fundamental invariants of orbit closures.
\textit{Journal of Algebra}, \textbf{477}, 390--434.

\bibitem{cay}
Cayley, A. (1843).
On the theory of determinants.
\textit{Transactions of the Cambridge Philosophical Society}, \textbf{8}, 1--16.

\bibitem{cay2}
Cayley, A. (1845).
On the theory of linear transformations.
\textit{Cambridge Mathematical Journal}, \textbf{4}, 193--209.

\bibitem{cohn}
Cohn, H., Umans, C. (2003, October).
A group-theoretic approach to fast matrix multiplication.
In \textit{Proceedings of the 44th Annual IEEE Symposium on Foundations of Computer Science} (pp.~438--449).
IEEE.

\bibitem{conrad}
Conrad, K. (1998).
On the origin of representation theory.
\textit{L'Enseignement Math\'ematique}, \textbf{44}, 361--392.

\bibitem{etingof}
Etingof, P. I., Golberg, O., Hensel, S., Liu, T., Schwendner, A., Vaintrob, D., Yudovina, E. (2011).
\textit{Introduction to representation theory}.
(Vol.~59).
American Mathematical Society.

\bibitem{frob}
Frobenius, G. (1896).
{\"U}ber Gruppencharaktere.
\textit{Sitzungsberichte der K{\"o}niglich Preussischen Akademie der Wissenschaften zu Berlin}.

\bibitem{formsibl}
Formanek, E., Sibley, D. (1991).
The group determinant determines the group.
\textit{Proceedings of the American Mathematical Society}, \textbf{112}(3), 649--656.

\bibitem{fulton}
Fulton, W. (1997).
\textit{Young tableaux: With applications to representation theory and geometry}.
(London Mathematical Society Student Texts, Vol.~35).
Cambridge University Press.

\bibitem{gkz}
Gelfand, I. M., Kapranov, M. M., Zelevinsky, A. V. (1994).
Hyperdeterminants.
In \textit{Discriminants, Resultants, and Multidimensional Determinants} (pp.~444--479).
Birkh{\"a}user Boston.

\bibitem{rota}
Huang, R., Rota, G. C. (1994).
On the relations of various conjectures on Latin squares and straightening coefficients.
\textit{Discrete Mathematics}, \textbf{128}(1--3), 225--236.

\bibitem{kum}
Kumar, S., Landsberg, J. M. (2015). Connections between conjectures of Alon–Tarsi, Hadamard–Howe, and integrals over the special unitary group. \textit{Discrete Mathematics}, 338(7), 1232-1238.

\bibitem{land}
Landsberg, J. M. (2017).
\textit{Geometry and complexity theory}.
(Cambridge Studies in Advanced Mathematics, Vol.~169).
Cambridge University Press.


\bibitem{lzx}
Li, X., Zhang, L., Xia, H. (2025).
Two classes of minimal generic fundamental invariants for tensors.
\textit{Linear Algebra and its Applications}, \textbf{720}, 174--212.

\bibitem{luq-thi}
Luque, J. G., Thibon, J. Y. (2003).
Hankel hyperdeterminants and Selberg integrals.
\textit{Journal of Physics A: Mathematical and General}, \textbf{36}(19), 5267.

\bibitem{matsumoto}
Matsumoto, S. (2008).
\textit{Hyperdeterminantal expressions for Jack functions of rectangular shapes}.
\textit{Journal of Algebra}, \textbf{320}(2), 612--632.

\bibitem{stein}
Steinberg, B. (2022).
Factoring the Dedekind--Frobenius determinant of a semigroup.
\textit{Journal of Algebra}, \textbf{605}, 1--36.

\bibitem{yel}
Yeliussizov, D. (2023).
Stability of the Levi-Civita tensors and an Alon--Tarsi type theorem.
\textit{Comptes Rendus. Math\'ematique}, \textbf{361}(G8), 1367--1373.

\bibitem{zappa}
Zappa, P. (1997).
The Cayley determinant of the determinant tensor and the Alon--Tarsi conjecture.
\textit{Advances in Applied Mathematics}, \textbf{19}(1), 31--44.
\end{thebibliography}
\end{document}